\newtheorem{theorem}{Theorem}[section]
\newtheorem{corollary}[theorem]{Corollary}
\newtheorem{lemma}[theorem]{Lemma}
\newtheorem{remark}[theorem]{Remark}
\numberwithin{equation}{section}
\newcommand{\R}{\mathbb{R}}
\newcommand{\barint}{
\rule[.036in]{.12in}{.009in}\kern-.16in \displaystyle\int }
\newcommand{\barcal}{\mbox{$ \rule[.036in]{.11in}{.007in}\kern-.128in\int $}}
\let\@wraptoccontribs\wraptoccontribs
\mathchardef\mhyphen="2D
\title{Capacitary Maximal Inequalities and Applications}
\author[Y.-W. Chen]{You-Wei Benson Chen}
\author[K. H. Ooi]{Keng Hao Ooi}
\author[D. Spector]{Daniel Spector}
\address[Y.-W. Chen]{
National Chiao Tung University, Department of Applied Mathematics, 1001 Ta Hsueh Rd, 30010 Hsinchu, Taiwan, R.O.C.
}
\email{bensonchen.sc07@nycu.edu.tw}
\address[K. H. Ooi]{Department of Mathematics,
National Central University,
No.300, Jhongda Rd., Jhongli City, Taoyuan County 32001, Taiwan (R.O.C.).}
\email{kooi1@math.ncu.edu.tw}
\address[D. Spector]{Department of Mathematics, National Taiwan Normal University, No. 88, Section 4, Tingzhou Road, Wenshan District, Taipei City, Taiwan 116, R.O.C.
}
\email{spectda@protonmail.com}
\date{}
\begin{document}

\maketitle

\begin{abstract}
In this paper we introduce capacitary analogues of the Hardy-Littlewood maximal function,
\begin{align*}
\mathcal{M}_Cf(x):= \sup_{r>0} \frac{1}{C(B(x,r))} \int_{B(x,r)} |f|\;dC,
\end{align*}
for $C=$ the Hausdorff content or a Riesz capacity.  For these maximal functions, we prove a strong-type $(p,p)$ bound for $1<p \leq+\infty$ on the capacitary integration spaces $L^p(C)$ and a weak-type $(1,1)$ bound on the capacitary integration space $L^1(C)$.  We show how these estimates clarify and improve the existing literature concerning maximal function estimates on capacitary integration spaces.  As a consequence, we deduce correspondingly stronger differentiation theorems of Lebesgue-type, which in turn, by classical capacitary inequalities, yield more precise estimates concerning Lebesgue points for functions in Sobolev spaces.  
\end{abstract}

\section{Introduction}
Let $f:\mathbb{R}^{n} \to \mathbb{R}$ be a Lebesgue almost everywhere defined locally integrable function and denote by
\begin{align*}
\mathcal{M}f(x)=\sup_{r>0}\frac{1}{|B(x,r)|}\int_{B(x,r)}|f(y)|dy,\quad x\in\mathbb{R}^{n}
\end{align*}
its Hardy-Littlewood maximal function.  It is a classical result \cite[Theorem 1 on p.~5]{S} that the Hardy-Littlewood maximal operator $\mathcal{M}$ is of strong-type $(p,p)$ for $1<p \leq+\infty$ and weak-type $(1,1)$:  For $1<p<+\infty$, there exists a constant $C=C(n,p)>0$ such that
    \begin{align}\label{strong_HL}
      \int_{\mathbb{R}^n} (\mathcal{M} f)^p \; dx \leq  
 C \int_{\mathbb{R}^n}  |f|^p \; dx,
    \end{align}
    for all $f \in L^p(\mathbb{R}^n)$; when $p=+\infty$ one has a corresponding inequality involving the essential supremum; for $p=1$ there exists a constant $C=C(n)>0$ such that
   \begin{align}\label{weak_endpoint_HL}
        \left| \{ x\in \mathbb{R}^n: \mathcal{M} f(x) >t  \}   \right| \leq \frac{C}{t} \int_{\mathbb{R}^n}  |f| \; d x
   \end{align}
for all $f \in L^1(\mathbb{R}^n)$.  As is well-known, the inequality \eqref{strong_HL} follows from interpolation of the $L^\infty(\mathbb{R}^n)$ estimate and \eqref{weak_endpoint_HL}.  

The inequalities \eqref{strong_HL} and \eqref{weak_endpoint_HL}, along with density of continuous, compactly supported functions in $L^p(\mathbb{R}^n)$, imply by standard arguments \cite[p.~11]{S} that for $f \in L^p(\mathbb{R}^n)$ one has the existence of the precise representative
\begin{align}\label{representative}
f^*(x):= \lim_{r \to 0} \frac{1}{|B(x,r)|}\int_{B(x,r)}f(y)dy
\end{align}
and the convergence
\begin{align}\label{convergence}
\lim_{r \to 0} \frac{1}{|B(x,r)|}\int_{B(x,r)}|f(y)-f^*(x)|^p \;dy = 0
\end{align}
for Lebesgue almost every $x \in \mathbb{R}^n$.

While the strong-type $(1,1)$ inequality - \eqref{strong_HL} with $p=1$ - is known to fail, D.R. Adams \cite{AdamsChoquet} proved that for capacitary integration spaces defined in terms of the Hausdorff content one has such a strong-type estimate for this exponent.  The proof of this result was later simplified by J. Orobitg and J. Verdera \cite{OV}, who somewhat surprisingly showed that one can even obtain a strong-type inequality for exponents less than one, along with a weak-type estimate at an endpoint (below which the estimate fails).  To state their results, we recall that for $\beta \in (0,n]$ the Hausdorff content $\mathcal{H}^{\beta}_{\infty}$ is the set function defined by
\begin{align*}
\mathcal{H}^{\beta}_{\infty}(E):= \inf \left\{\sum_{i=1}^\infty \omega_{\beta} r_i^\beta : E \subset \bigcup_{i=1}^\infty B(x_i,r_i) \right\},
\end{align*}
where $\omega_\beta:= \pi^{\beta/2}/ \Gamma(\beta/2+1)$ is a normalization constant such that for $k \in \mathbb{N}$, $\mathcal{H}^{k}_{\infty}$ restricted to $k$-dimensional hyperplanes is the $k$-dimensional Lebesgue measure.  

Let $\alpha \in [0,n)$ so that $n-\alpha \in (0,n]$.  For an $\mathcal{H}^{n-\alpha}_{\infty}$ almost everywhere defined non-negative function $g : \mathbb{R}^n \to [0,\infty]$ we define the Choquet integral of $g$ with respect to $\mathcal{H}^{n-\alpha}_{\infty}$ by
\begin{align}\label{choquet_integral}
\int_{\mathbb{R}^n}  g \; d \mathcal{H}^{n-\alpha}_\infty := \int_0^\infty \mathcal{H}^{n-\alpha}_\infty(\{ g>t\})\;dt.
\end{align}
Note that monotonicity of the set function $\mathcal{H}^{n-\alpha}_\infty$ implies that this is always well-defined as a Lebesgue integral.  For $p \in (0,\infty)$, we define the capacitary integration spaces $L^p(\mathcal{H}^{n-\alpha}_{\infty})$ as the set of 
$\mathcal{H}^{n-\alpha}_{\infty}$-quasicontinuous functions (see \cite[Section 2]{Chen-Spector} or \cite[Section 3, p.~15]{AdamsChoquet}, for example) for which the quasi-norm
\begin{align}\label{quasi-norm}
\|f\|_{L^p(\mathcal{H}^{n-\alpha}_{\infty})}:=\left(\int_{\mathbb{R}^n}  |f|^p \; d \mathcal{H}^{n-\alpha}_\infty\right)^{1/p}
\end{align}
is finite.

The result of Adams-Orobitg-Verdera can now be stated as
\begin{theorem}\label{basetheorem}
    Let $\alpha \in [0,n)$, $ p \in [1-\alpha/n  ,+\infty)$, and suppose $f \in L^p(\mathcal{H}^{n-\alpha}_\infty)$.
    \begin{itemize}
    \item[(i)]
 If $p>1-\frac{\alpha}{n }$, then there exists a constant $C=C(\alpha, n,p)>0$ such that 
    \begin{align*}
      \int_{\mathbb{R}^n} (\mathcal{M} f)^p \; d \mathcal{H}^{n-\alpha}_\infty \leq  
 C \int_{\mathbb{R}^n}  |f|^p \; d \mathcal{H}^{n-\alpha}_\infty.
    \end{align*}
\item[(ii)] If $p = 1-\frac{\alpha}{n}$, then there exists a constant $C=C(\alpha,n)>0$ such that 
   \begin{align*}
       \mathcal{H}^{n-\alpha}_\infty \left( \{ x\in \mathbb{R}^n: \mathcal{M} f(x) >t  \}   \right)\leq \frac{C}{t^{p}} \int_{\mathbb{R}^n}  |f|^p \; d \mathcal{H}^{n-\alpha}_\infty.
   \end{align*}
   \end{itemize}
   \end{theorem}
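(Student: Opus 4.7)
The plan is to first prove the weak-type endpoint (ii) at $p_0 := 1 - \alpha/n$ and then derive the strong-type bound (i) from (ii) by a Marcinkiewicz-type interpolation against the trivial $L^\infty$ bound $\|\mathcal{M}f\|_{L^\infty(\mathcal{H}^{n-\alpha}_\infty)} \leq \|f\|_{L^\infty(\mathcal{H}^{n-\alpha}_\infty)}$. This $L^\infty$ bound is valid because every $\mathcal{H}^{n-\alpha}_\infty$-null set is also Lebesgue null (any cover witnessing $\sum r_i^{n-\alpha} < \varepsilon$ has $r_i$ eventually small, so $\sum r_i^n \leq \sum r_i^{n-\alpha}$), which transfers the capacitary essential supremum to the Lebesgue essential supremum, where $\mathcal{M}f \leq \|f\|_{L^\infty}$ pointwise. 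Throughout, the main structural facts I would use are countable subadditivity of the Hausdorff content, the scaling $\mathcal{H}^{n-\alpha}_\infty(B(x,r)) = \omega_{n-\alpha} r^{n-\alpha}$, and the local comparison $|E| \leq C r^{\alpha} \mathcal{H}^{n-\alpha}_\infty(E)$ for $E \subset B(x,r)$, obtained by writing $r_i^n = r_i^\alpha r_i^{n-\alpha}$ in any efficient cover with $r_i \leq 2r$.

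For (ii), I would first dominate $\mathcal{M}$ pointwise by a finite sum of translated dyadic maximal functions $M^d_\tau$ to reduce to the dyadic setting. A Calder\'{o}n--Zygmund stopping-time construction at height $t$ then yields a disjoint family of maximal dyadic cubes $\{Q_i\}$ with $\frac{1}{|Q_i|}\int_{Q_i}|f|\,dy > t$ whose union is $\{M^d f > t\}$. By countable subadditivity and the scaling of $\mathcal{H}^{n-\alpha}_\infty$,
\[
\mathcal{H}^{n-\alpha}_\infty\bigl(\{M^d f > t\}\bigr) \leq C \sum_i \ell(Q_i)^{n-\alpha};
\]
using $n - \alpha = p_0 n$ and $\ell(Q_i)^n \leq t^{-1}\int_{Q_i}|f|\,dy$, this is bounded by $C t^{-p_0}\sum_i\bigl(\int_{Q_i}|f|\,dy\bigr)^{p_0}$. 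The theorem thus reduces to the key inequality
\[
\sum_i \left(\int_{Q_i}|f|\,dy\right)^{p_0} \leq C \int_{\mathbb{R}^n}|f|^{p_0}\,d\mathcal{H}^{n-\alpha}_\infty,
\]
which I expect to be the main obstacle. The plan for it is to combine the pointwise Lebesgue-to-Choquet comparison $\int_Q|f|\,dy \leq C\ell(Q)^\alpha\int_Q|f|\,d\mathcal{H}^{n-\alpha}_\infty$ (obtained by applying the local comparison to each superlevel set of $|f|$ and invoking layer cake) with the subadditivity $(a+b)^{p_0} \leq a^{p_0} + b^{p_0}$ available for $p_0 \leq 1$, decomposing $|f| \approx \sum_k 2^k \chi_{E_k}$ with $E_k = \{2^k < |f| \leq 2^{k+1}\}$, so that the geometric factors $\ell(Q_i)^{\alpha p_0}$ can be absorbed against $\mathcal{H}^{n-\alpha}_\infty(E_k \cap Q_i)$ via the disjointness of the $Q_i$'s and the defining cover property of $\mathcal{H}^{n-\alpha}_\infty$.

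For (i), I would then split $f = f^\lambda + f_\lambda$ with $f^\lambda = f\chi_{\{|f|>\lambda\}}$ at each threshold $\lambda = ct$ and use $\mathcal{M}f_\lambda \leq \lambda$ together with sublinearity of $\mathcal{M}$ and subadditivity of $\mathcal{H}^{n-\alpha}_\infty$ to conclude $\{\mathcal{M}f > 2t\} \subset \{\mathcal{M}f^{ct} > t\}$ up to a content-null set. Inserting the endpoint bound from (ii) into the layer-cake representation
\[
\int(\mathcal{M}f)^p\,d\mathcal{H}^{n-\alpha}_\infty = p\int_0^\infty t^{p-1}\mathcal{H}^{n-\alpha}_\infty(\{\mathcal{M}f>t\})\,dt
\]
and switching the order of integration (by Tonelli in the outer $t$ variable, which is a genuine Lebesgue integral) yields the strong-type bound $\int(\mathcal{M}f)^p\,d\mathcal{H}^{n-\alpha}_\infty \leq C_p\int|f|^p\,d\mathcal{H}^{n-\alpha}_\infty$ with a constant that blows up as $p \downarrow p_0$. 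The only ingredients beyond routine Marcinkiewicz bookkeeping are countable subadditivity of $\mathcal{H}^{n-\alpha}_\infty$ and the layer-cake identity for Choquet integrals, so no genuine $\sigma$-additivity of the underlying set function is needed.
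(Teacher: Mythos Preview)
Your route is genuinely different from the paper's. The paper does \emph{not} attack the Hardy--Littlewood maximal function directly; instead it first proves the natural weak-type $(1,1)$ bound for the \emph{capacitary} maximal function $\mathcal{M}_{\mathcal{H}^{n-\alpha}_\infty}$ (Theorem~\ref{maintheorem1}), where the endpoint is $p=1$, and then recovers Theorem~\ref{basetheorem} from the pointwise dimension-change inequality $\mathcal{M}f = \mathcal{M}_{\mathcal{H}^n_\infty}f \leq \tfrac{n}{n-\alpha}\bigl(\mathcal{M}_{\mathcal{H}^{n-\alpha}_\infty}(|f|^{p_0})\bigr)^{1/p_0}$ of Lemma~\ref{maximaldimetionalchange} (see Corollary~\ref{maintheorem2} with $\beta=n$, $\gamma=n-\alpha$). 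In this picture the sub-one exponent $p_0=1-\alpha/n$ is not mysterious: it is simply the power that converts the $n$-dimensional average into the $(n-\alpha)$-dimensional one.

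Your direct Calder\'on--Zygmund approach is closer to the original Orobitg--Verdera argument, but the sketch contains a real gap at the ``key inequality''
\[
\sum_i \Bigl(\int_{Q_i}|f|\,dy\Bigr)^{p_0} \leq C \int_{\mathbb{R}^n}|f|^{p_0}\,d\mathcal{H}^{n-\alpha}_\infty .
\]
As stated for arbitrary disjoint (even dyadic) cubes this is \emph{false}: take $f=\chi_{B(0,1)}$ and let $\{Q_i\}$ be the $N^n$ dyadic cubes of side $1/N$ tiling the unit cube; then the left side is $\sim N^n\cdot(1/N)^{np_0}=N^{\alpha}\to\infty$ while the right side is $\sim 1$. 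If instead you restrict to the Calder\'on--Zygmund cubes at level $t$, then by the stopping condition $\int_{Q_i}|f|\,dy\sim t\,|Q_i|$, so the left side is $\sim t^{p_0}\sum_i\ell(Q_i)^{n-\alpha}$, and the inequality becomes precisely the weak-type bound you set out to prove---nothing has been reduced. Your proposed mechanism (``absorb $\ell(Q_i)^{\alpha p_0}$ against $\mathcal{H}^{n-\alpha}_\infty(E_k\cap Q_i)$ via disjointness and the defining cover property'') does not close: after the level-set decomposition and subadditivity one lands on $\sum_i\ell(Q_i)^{\alpha p_0}\mathcal{H}^{n-\alpha}_\infty(E_k\cap Q_i)^{p_0}$, and since $p_0-1<0$ the trivial bound $\mathcal{H}^{n-\alpha}_\infty(E_k\cap Q_i)\lesssim\ell(Q_i)^{n-\alpha}$ goes the wrong way, while a covering of $E_k$ by balls $B_j$ gives $\sum_j r_j^{n-\alpha}\cdot\#\{i:B_j\cap Q_i\neq\emptyset\}$, and a single large $B_j$ can meet arbitrarily many small $Q_i$. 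The missing ingredient is exactly a device that compares Lebesgue averages to $(n-\alpha)$-dimensional Choquet averages \emph{globally} rather than cube-by-cube; this is what the paper's Lemma~\ref{alphabetachange}/Lemma~\ref{maximaldimetionalchange} supplies and what makes the detour through $\mathcal{M}_{\mathcal{H}^{n-\alpha}_\infty}$ effective.
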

 
 It is relevant to note that the authors in \cite{OV} do not require $f \in L^p(\mathcal{H}^{n-\alpha}_\infty)$, only the weaker condition that $f$ is $\mathcal{H}^{n-\alpha}_{\infty}$ almost everywhere defined with $\|f\|_{L^p(\mathcal{H}^{n-\alpha}_{\infty})}<+\infty$.  With the Lebesgue measure there is no distinction, because one automatically has density of continuous functions with compact support in $L^p(\mathbb{R}^n)$.  In the case of the Hausdorff content $\mathcal{H}^{n-\alpha}_{\infty}$, however, one has numerous examples of functions which are everywhere defined with finite integral and yet not possible to approximate by continuous compactly supported functions, for example, the characteristic function of any set when $\alpha \geq 1$!  Our choice to follow the assumptions of \cite{AdamsChoquet} is because when one works with the space $L^p(\mathcal{H}^{n-\alpha}_{\infty})$, the density of continuous, compactly supported functions implies by the same argument as in the $L^p(\mathbb{R}^n)$ setting that one has the existence of \eqref{representative} and the convergence \eqref{convergence} for $\mathcal{H}^{n-\alpha}$ almost every $x \in \mathbb{R}^n$.  In this way, one can deduce an improvement to the usual Lebesgue differentiation theorem for functions in the various Sobolev spaces which embed into $L^1(\mathcal{H}^{n-\alpha}_{\infty})$ \cite{AdamsChoquet, PS, PS1} - the existence of the precise representative $\mathcal{H}^{n-\alpha}$ almost everywhere.
 
In this paper we obtain a further improvement of this Lebesgue differentiation theorem by a refinement of Theorem \ref{basetheorem} which has the additional benefit of clarifying the somewhat mysterious validity of strong-type estimates for exponents less than one.  To this end we define the centered Hausdorff content maximal function
\begin{align*} 
\mathcal{M}_{\mathcal{H}^\beta_\infty} f(x) \vcentcolon= \sup_{r>0}  \frac{1}{\omega_\beta r^\beta} \int_{B(x,r)} |f| \;\;d\mathcal{H}^{\beta}_{\infty}.
\end{align*}
We can now state the first result of this paper, an improvement of Theorem \ref{basetheorem} with a capacitary maximal function in place of the Hardy-Littlewood maximal function, our  
\begin{theorem}\label{maintheorem1}
    Let $\beta \in (0,n)$, $p \in [1,+\infty)$, and suppose $f \in L^p(\mathcal{H}^{\beta}_\infty)$. 
\begin{itemize}
\item[(i)] If $p>1$, then there exists a constant $C=C(\beta, n,p)>0$ such that 
    \begin{align*}
      \int_{\mathbb{R}^n} (\mathcal{M}_{\mathcal{H}^\beta_\infty} f)^p \; d \mathcal{H}^\beta_\infty \leq  
 C \int_{\mathbb{R}^n}  |f|^p \; d \mathcal{H}^\beta_\infty.
    \end{align*}
\item[(ii)] If $p = 1$, then there exists a constant $C=C(\beta, n)>0$ such that 
   \begin{align*}
       \mathcal{H}^\beta_\infty \left( \{ x\in \mathbb{R}^n: \mathcal{M}_{\mathcal{H}^\beta_\infty} f(x) >t  \}   \right)\leq \frac{C}{t} \int_{\mathbb{R}^n}  |f| \; d \mathcal{H}^\beta_\infty.
   \end{align*}
   \end{itemize}
\end{theorem}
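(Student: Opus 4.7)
The plan is to prove part (ii) first (weak-type $(1,1)$ at the endpoint), establish the trivial $L^\infty$ bound, and then deduce part (i) via a Marcinkiewicz-style interpolation adapted to the Choquet integral through the layer-cake representation.

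For part (ii), fix $t>0$ and set $E_t := \{x \in \mathbb{R}^n : \mathcal{M}_{\mathcal{H}^\beta_\infty} f(x) > t\}$. For each $x \in E_t$, the supremum in the definition of $\mathcal{M}_{\mathcal{H}^\beta_\infty}$ furnishes $r_x > 0$ with $\int_{B(x,r_x)} |f|\,d\mathcal{H}^\beta_\infty > t\,\omega_\beta r_x^\beta$. A Vitali-type $5r$-covering lemma then produces a countable, pairwise disjoint subfamily $\{B(x_i,r_i)\}$ of $\{B(x,r_x)\}_{x \in E_t}$ with $E_t \subset \bigcup_i B(x_i, 5r_i)$. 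Countable subadditivity of $\mathcal{H}^\beta_\infty$ yields
\begin{equation*}
\mathcal{H}^\beta_\infty(E_t) \leq \sum_i \omega_\beta (5r_i)^\beta < \frac{5^\beta}{t}\sum_i \int_{B(x_i,r_i)} |f|\,d\mathcal{H}^\beta_\infty,
\end{equation*}
and the remaining task is to bound $\sum_i \int_{B(x_i,r_i)} |f|\,d\mathcal{H}^\beta_\infty$ by a dimensional multiple of $\int_{\mathbb{R}^n} |f|\,d\mathcal{H}^\beta_\infty$.

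The $L^\infty$ bound is immediate from monotonicity of the Choquet integral: $\int_{B(x,r)} |f|\,d\mathcal{H}^\beta_\infty \leq \|f\|_{L^\infty(\mathcal{H}^\beta_\infty)}\,\omega_\beta r^\beta$, hence $\mathcal{M}_{\mathcal{H}^\beta_\infty} f \leq \|f\|_{L^\infty(\mathcal{H}^\beta_\infty)}$ pointwise. For part (i), I would split $f = f\chi_{\{|f|>t/2\}} + f\chi_{\{|f|\leq t/2\}}$; sublinearity of $\mathcal{M}_{\mathcal{H}^\beta_\infty}$ together with the $L^\infty$ bound forces $\{\mathcal{M}_{\mathcal{H}^\beta_\infty} f > t\} \subset \{\mathcal{M}_{\mathcal{H}^\beta_\infty}(f\chi_{\{|f|>t/2\}}) > t/2\}$. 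Inserting the weak-$(1,1)$ bound into the layer-cake identity
\begin{equation*}
\int (\mathcal{M}_{\mathcal{H}^\beta_\infty} f)^p\,d\mathcal{H}^\beta_\infty = p\int_0^\infty t^{p-1}\mathcal{H}^\beta_\infty(\{\mathcal{M}_{\mathcal{H}^\beta_\infty} f > t\})\,dt,
\end{equation*}
expanding the resulting inner Choquet integral by a second use of layer-cake, and swapping the two Lebesgue integrals in $t$ and $s$ via Fubini produces the desired $\int (\mathcal{M}_{\mathcal{H}^\beta_\infty} f)^p\,d\mathcal{H}^\beta_\infty \leq C_{\beta,n,p}\int_{\mathbb{R}^n} |f|^p\,d\mathcal{H}^\beta_\infty$.

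The main obstacle is the sum estimate in (ii). Because the Choquet integral is only subadditive on disjoint sets, the identity $\sum_i \int_{B_i} g\,d\mathcal{H}^\beta_\infty = \int_{\bigcup_i B_i} g\,d\mathcal{H}^\beta_\infty$ that trivially closes the classical Hardy-Littlewood argument for Lebesgue measure fails; indeed, the two sides can differ by an arbitrarily large factor (witness many small disjoint balls packed inside a set with bounded $\beta$-content). The argument must therefore exploit a finer structural feature, such as the bounded multiplicity of a Besicovitch decomposition of $\{B(x,r_x)\}_{x \in E_t}$ into boundedly many pairwise disjoint subfamilies, a judicious choice of near-maximal $r_x$, or a passage to a dyadic Whitney subcover with stopping-time structure, in order to secure the required control by a constant depending only on $n$ and $\beta$.
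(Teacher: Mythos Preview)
Your treatment of the $L^\infty$ bound and of part (i) via Marcinkiewicz interpolation is correct and matches the paper's argument essentially line for line (the paper packages the interpolation as Lemma~\ref{interpolationofcapacity}).

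The gap is in part (ii), and you have diagnosed it yourself: after Vitali, the estimate $\sum_i \int_{B_i} |f|\,d\mathcal{H}^\beta_\infty \leq C \int_{\mathbb{R}^n} |f|\,d\mathcal{H}^\beta_\infty$ for disjoint balls $B_i$ is simply false for the Choquet integral, and replacing Vitali by Besicovitch does not help. Bounded overlap gives you finitely many disjoint subfamilies, but within each subfamily you face the \emph{same} failure of superadditivity over disjoint sets that you already flagged; the obstruction is not the overlap of the cover but the non-additivity of $\mathcal{H}^\beta_\infty$ itself. So none of the Euclidean covering refinements you list closes the argument.

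The paper avoids this entirely by passing to the dyadic setting. It proves (Lemma~\ref{equivoftwomaximal}) that the level set $\{\mathcal{M}_{\mathcal{H}^\beta_\infty} f > t\}$ has content controlled by the level set $\{\mathcal{M}^{\beta,Q_0}_\infty f > ct\}$ of the \emph{dyadic} capacitary maximal function, by associating to each ball a nearby dyadic cube of comparable size and using translation invariance to handle the $3Q$ enlargement. The weak-type $(1,1)$ for the dyadic maximal function is then quoted from \cite[Theorem 3.1]{Chen-Spector}, whose proof rests on Mel\cprime nikov's covering lemma for dyadic cubes---a genuinely different covering mechanism exploiting the nested structure of the dyadic lattice, not a Vitali/Besicovitch argument. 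Your third suggestion, ``dyadic Whitney subcover with stopping-time structure,'' is in the right spirit, but it has to be made precise in exactly this way: reduce to the dyadic maximal operator first, and then invoke the dyadic weak-type bound as a black box.
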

As in the classical case, part $(i)$ in Theorem \ref{maintheorem1} follows by the usual interpolation argument between the estimate asserted in Part $(ii)$ and the easy $L^\infty(\mathcal{H}^\beta_\infty)$ estimate.  Part $(ii)$, in turn, follows from a similar weak-type estimate proved for its dyadic counterpart in \cite[Theorem 3.1]{Chen-Spector} and the fact that the level sets of the two are two-sided comparable, see Lemma \ref{equivoftwomaximal} below.  

Beyond its ease of demonstration, Theorem \ref{maintheorem1} is of interest to us for its implications concerning Lebesgue differentiation, as from it we deduce
\begin{corollary}\label{corollary_LDT}
Let $\beta \in (0,n)$, $p \in [1,+\infty)$, and suppose $f \in L^p(\mathcal{H}^{\beta}_\infty)$.  The function
\begin{align}\label{representative_cor}
f^*(x):= \lim_{r \to 0} \frac{1}{|B(x,r)|}\int_{B(x,r)}f(y)dy
\end{align}
is well-defined and one has the convergence
\begin{align}\label{convergence_cor}
\lim_{r \to 0} \frac{1}{\omega_\beta r^\beta}\int_{B(x,r)}|f-f^*(x)|^p \;d\mathcal{H}^\beta_\infty = 0
\end{align}
for $\mathcal{H}^\beta$ almost every $x \in \mathbb{R}^n$.
\end{corollary}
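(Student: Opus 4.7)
The plan is to mirror the classical Stein-type density-plus-maximal-inequality proof of the Lebesgue differentiation theorem, combining the weak-type bound of Theorem \ref{maintheorem1}(ii) with the Adams--Orobitg--Verdera strong-type bound (Theorem \ref{basetheorem}(i) with $\alpha = n - \beta$; our hypothesis $p \ge 1 > \beta/n$ places us strictly in that strong range). Existence of $f^*$ $\mathcal{H}^\beta$-a.e.\ together with the analogue of \eqref{convergence} in terms of Lebesgue averages is already furnished by the remark following Theorem \ref{basetheorem} that the classical proof carries over using density of $C_c(\mathbb{R}^n)$ in $L^p(\mathcal{H}^\beta_\infty)$, so the novel content of Corollary \ref{corollary_LDT} is the $L^p$-mode convergence in \eqref{convergence_cor} with the Hausdorff content Choquet integral in place of the Lebesgue integral.

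Fix $f \in L^p(\mathcal{H}^\beta_\infty)$ and $\epsilon > 0$, and by density choose $g \in C_c(\mathbb{R}^n)$ with $\|f - g\|_{L^p(\mathcal{H}^\beta_\infty)} < \epsilon$. Using $|a+b+c|^p \le 3^{p-1}(|a|^p + |b|^p + |c|^p)$ and the trivial bound $\mathcal{H}^\beta_\infty(B(x,r)) \le \omega_\beta r^\beta$, the integrand in \eqref{convergence_cor} splits as
\[
\frac{1}{\omega_\beta r^\beta}\int_{B(x,r)}|f-f^*(x)|^p \,d\mathcal{H}^\beta_\infty \le 3^{p-1}\Bigl[\mathcal{M}_{\mathcal{H}^\beta_\infty}(|f-g|^p)(x) + \sup_{y \in B(x,r)} |g(y) - g(x)|^p + |g(x) - f^*(x)|^p\Bigr].
\]
Continuity of $g$ kills the middle term as $r \to 0$, so writing $\Omega_p f(x) := \limsup_{r \to 0} \frac{1}{\omega_\beta r^\beta}\int_{B(x,r)}|f-f^*(x)|^p \,d\mathcal{H}^\beta_\infty$, I arrive at the pointwise estimate $\Omega_p f(x) \le 3^{p-1}\bigl(\mathcal{M}_{\mathcal{H}^\beta_\infty}(|f-g|^p)(x) + |g(x) - f^*(x)|^p\bigr)$.

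The first piece is controlled in $\mathcal{H}^\beta_\infty$ measure by Theorem \ref{maintheorem1}(ii) applied to $|f-g|^p \in L^1(\mathcal{H}^\beta_\infty)$. The second piece obeys the pointwise bound $|g - f^*| \le \mathcal{M}(f-g)$, where $\mathcal{M}$ is the Hardy--Littlewood maximal function and the inequality follows from the continuity of $g$ together with the definition of $f^*$ as a Lebesgue average limit; the $\mathcal{H}^\beta_\infty$-distribution of this quantity is then controlled by the Adams--Orobitg--Verdera strong-type bound (Theorem \ref{basetheorem}(i)) combined with Chebyshev's inequality for the Choquet integral. Both bounds are proportional to $\|f-g\|_{L^p(\mathcal{H}^\beta_\infty)}^p < \epsilon^p$, so $\mathcal{H}^\beta_\infty(\{\Omega_p f > \delta\}) = 0$ for every $\delta > 0$, and countable subadditivity of $\mathcal{H}^\beta_\infty$ over the sequence $\delta = 1/k$ yields \eqref{convergence_cor}. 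The principal delicacy is the simultaneous involvement of two distinct maximal operators: $\mathcal{M}$ governs the Lebesgue-average definition of $f^*$, while $\mathcal{M}_{\mathcal{H}^\beta_\infty}$ governs the Hausdorff content integral appearing in the conclusion, and it is precisely the joint availability of Theorem \ref{maintheorem1} and the older Adams--Orobitg--Verdera estimate that allows the two pieces to be closed off in tandem.
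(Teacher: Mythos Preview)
Your proof is correct and follows the same density-plus-maximal-function template as the paper's proof. The one substantive difference lies in how the term $|g(x)-f^*(x)|^p$ is handled: you dominate it pointwise by $(\mathcal{M}(f-g)(x))^p$ and then invoke the Adams--Orobitg--Verdera strong-type bound (Theorem~\ref{basetheorem}(i)) together with Chebyshev, whereas the paper applies Chebyshev directly, using that the quasicontinuity hypothesis on $f$ forces $f=f^*$ $\mathcal{H}^\beta_\infty$-quasieverywhere, so that $\int |g^*-f^*|^p\,d\mathcal{H}^\beta_\infty = \|g-f\|_{L^p(\mathcal{H}^\beta_\infty)}^p$. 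Your route trades the quasicontinuity identification for the older maximal inequality of Theorem~\ref{basetheorem}; the paper's route is slightly more economical in that it needs only Theorem~\ref{maintheorem1}, but it leans more heavily on the fine-property consequence of quasicontinuity.
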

\noindent
In particular, as one has that $W^{k,1}(\mathbb{R}^n) \hookrightarrow L^1( \mathcal{H}^{n-k}_\infty)$ (which follows from density of smooth, compactly supported functions in the former space and \cite[Theorem B]{AdamsChoquet}), from this one concludes that $f^*$ exists and
\begin{align*}
\lim_{r \to 0} \frac{1}{\omega_{n-k} r^{n-k}}\int_{B(x,r)}|f-f^*(x)| \;d\mathcal{H}^{n-k}_\infty = 0
\end{align*}
$\mathcal{H}^{n-k}$ almost every $x \in \mathbb{R}^n$.  For $k=1$ this is an improvement to \cite[(3) on p.~184]{Federer}, as the convergence with respect to the capacity $\mathcal{H}^{n-k}_\infty$ implies even convergence in the Lorentz space $L^{n/(n-k),1}(\mathbb{R}^n)$, for example.  This also yields an improvement to the fine properties of Riesz potentials of elements in the Hardy space which are a consequence of Adams' embedding \cite{AdamsChoquet}, and of curl and divergence free functions $F \in L^1(\mathbb{R}^n;\mathbb{R}^n)$ by \cite{RSS}, which builds on the atomic decomposition established in \cite{SpectorHernandez2020} and exploited in \cite{SpectorHernandezRaita2021}.

Our claim that Theorem \ref{basetheorem} can be deduced from Theorem \ref{maintheorem1} is substantiated by the choice $\gamma=n-\alpha$ and $\beta = n$ in 
  \begin{corollary}\label{maintheorem2}
    Let $0< \gamma <  \beta  \leq n$, $p\in[ \gamma / \beta,+\infty)$, and suppose $f \in L^p(\mathcal{H}^{\gamma}_\infty)$.
    \begin{itemize}
\item[(i)] If $p>\frac{\gamma}{\beta }$, then there exists a constant $C=C(\gamma,\beta,n,p)>0$ such that 
    \begin{align*}
      \int_{\mathbb{R}^n} (\mathcal{M}_{\mathcal{H}^\beta_\infty} f)^p \; d \mathcal{H}^{\gamma}_\infty \leq  
 C \int_{\mathbb{R}^n}  |f|^p \; d \mathcal{H}^{\gamma}_\infty.
    \end{align*}
\item[(ii)] If $p =\frac{\gamma}{\beta}$, then there exists a constant $C=C(\gamma,\beta,n)>0$ such that 
   \begin{align*}
       \mathcal{H}^{\gamma}_\infty \left( \{ x\in \mathbb{R}^n: \mathcal{M}_{\mathcal{H}^\beta_\infty} f(x) >t  \}   \right) \leq \frac{C}{t^{p}} \int_{\mathbb{R}^n}  |f|^p \; d \mathcal{H}^{\gamma}_\infty.
   \end{align*}
   \end{itemize} 
   \end{corollary}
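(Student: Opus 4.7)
The plan is to derive Corollary \ref{maintheorem2} from Theorem \ref{maintheorem1} via the pointwise estimate
$$\left(\mathcal{M}_{\mathcal{H}^\beta_\infty} f(x)\right)^{p_0} \leq C \, \mathcal{M}_{\mathcal{H}^\gamma_\infty}\bigl(|f|^{p_0}\bigr)(x), \qquad p_0 = \gamma/\beta \in (0,1],$$
which, once established, allows one to apply Theorem \ref{maintheorem1}(ii) to $|f|^{p_0} \in L^1(\mathcal{H}^\gamma_\infty)$ at level $t^{p_0}/C$ and deduce Part (ii) immediately. Part (i) then follows from Part (ii) and the trivial $L^\infty(\mathcal{H}^\gamma_\infty)$ bound on $\mathcal{M}_{\mathcal{H}^\beta_\infty}$ by the Marcinkiewicz-type interpolation indicated after Theorem \ref{maintheorem1}; the $L^\infty$ bound uses that, for $\gamma < \beta$, every $\mathcal{H}^\gamma_\infty$-null set is $\mathcal{H}^\beta_\infty$-null, which has a short direct covering proof.

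After normalizing both sides by $\omega_\beta r^\beta$ and $\omega_\gamma r^\gamma$ respectively and using $\beta p_0 = \gamma$, the pointwise bound reduces to the ball-integrated inequality
$$\left(\int_B f \, d\mathcal{H}^\beta_\infty\right)^{p_0} \leq C \int_B f^{p_0} \, d\mathcal{H}^\gamma_\infty$$
for every ball $B$ and every nonnegative $f$. The first sub-step is the set-theoretic comparison $\mathcal{H}^\beta_\infty(A)^{p_0} \leq C\,\mathcal{H}^\gamma_\infty(A)$: for any cover $A \subset \bigcup_i B(x_i,r_i)$, the inequality $\bigl(\sum_i r_i^\beta\bigr)^{p_0} \leq \sum_i (r_i^\beta)^{p_0} = \sum_i r_i^\gamma$ (valid for $p_0 \leq 1$), combined with optimizing over covers, yields the bound. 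Plugging this into the layer-cake representation of $\int_B f\, d\mathcal{H}^\beta_\infty$ at each level $s$ gives
$$\int_B f \, d\mathcal{H}^\beta_\infty \leq C \int_0^\infty G(s)^{1/p_0}\, ds, \qquad G(s) := \mathcal{H}^\gamma_\infty(\{f>s\} \cap B),$$
where $G$ is nonincreasing.

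To close the argument I would establish a Hardy-type inequality for decreasing nonnegative functions: with $q := 1/p_0 \geq 1$,
$$\left(\int_0^\infty G(s)^q \, ds\right)^{1/q} \leq C(q) \int_0^\infty s^{1/q - 1} G(s) \, ds,$$
whose right-hand side equals $p_0^{-1} \int_B f^{p_0}\, d\mathcal{H}^\gamma_\infty$ by another layer cake, so raising both sides to $p_0$ yields the ball-integrated inequality. The Hardy-type estimate itself follows from a dyadic decomposition: on $I_k = [2^k, 2^{k+1})$, monotonicity of $G$ gives $\int_{I_k} G^q \leq 2^k G(2^k)^q$, so setting $b_k := 2^{k/q} G(2^k)$ one obtains $\int_0^\infty G^q \leq \|b\|_{\ell^q}^q \leq \|b\|_{\ell^1}^q$ via the embedding $\ell^1 \hookrightarrow \ell^q$ for $q \geq 1$; each $b_k$ is in turn bounded by a constant multiple of $\int_{2^{k-1}}^{2^k} s^{1/q-1} G(s)\, ds$ since $G$ is decreasing, and summing completes the estimate.

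The main obstacle is this Hardy-type step, as it requires properly exploiting the monotonicity of the content-valued level set function; aside from it, the content comparison is a routine covering argument, the layer-cake transfer is mechanical, and the final passage from the pointwise inequality to Part (ii) via Theorem \ref{maintheorem1}(ii) is a one-line substitution. The novelty of the proof therefore lies in turning the dimensional mismatch between $\beta$ and $\gamma$ into an effective pointwise domination of one capacitary maximal function by another, which then reduces everything to the case already handled by Theorem \ref{maintheorem1}.
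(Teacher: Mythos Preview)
Your proposal is correct and follows essentially the same route as the paper: reduce to Theorem~\ref{maintheorem1} via the pointwise comparison $\mathcal{M}_{\mathcal{H}^\beta_\infty} f \leq C\bigl(\mathcal{M}_{\mathcal{H}^\gamma_\infty}|f|^{\gamma/\beta}\bigr)^{\beta/\gamma}$, which in turn rests on the ball-integrated inequality $\bigl(\int_B f\,d\mathcal{H}^\beta_\infty\bigr)^{\gamma/\beta} \leq C\int_B f^{\gamma/\beta}\,d\mathcal{H}^\gamma_\infty$ coming from the set comparison $\mathcal{H}^\beta_\infty(E)^{\gamma/\beta} \leq C\,\mathcal{H}^\gamma_\infty(E)$.

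Two executional differences are worth noting. First, for the integrated inequality the paper (Lemma~\ref{alphabetachange}) bypasses your Hardy-type dyadic estimate by a one-line Chebyshev trick: after the layer-cake and change of variable one writes
\[
t^{\beta/\gamma-1}\bigl(\mathcal{H}^\gamma_\infty(\{f^{\gamma/\beta}>t\})\bigr)^{\beta/\gamma}
= \bigl(t\,\mathcal{H}^\gamma_\infty(\{f^{\gamma/\beta}>t\})\bigr)^{\beta/\gamma-1}\mathcal{H}^\gamma_\infty(\{f^{\gamma/\beta}>t\}),
\]
bounds the first factor by $\bigl(\int f^{\gamma/\beta}\,d\mathcal{H}^\gamma_\infty\bigr)^{\beta/\gamma-1}$ via Chebyshev, and integrates the second factor directly. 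This is slicker than the $\ell^1\hookrightarrow\ell^q$ argument, though yours is equally valid. Second, for part~(i) the paper does not interpolate: once the pointwise bound is in hand, it applies Theorem~\ref{maintheorem1}(i) directly to $f^{\gamma/\beta}$ with exponent $\beta p/\gamma>1$, which avoids having to extend the interpolation lemma to a weak endpoint below~$1$.
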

\noindent
Corollary \ref{maintheorem2} relies on Theorem \ref{maintheorem1} and Lemma \ref{maximaldimetionalchange} in Section \ref{preliminaries}, the latter of which finally explains the appearance of exponents below one, which are a consequence of the change between the maximal functions of various dimensions in relation to a fixed Hausdorff content.

This phenomena of estimates below the exponent one extends beyond the capacitary integration spaces defined in terms of the Hausdorff content.  Indeed, motivated by Theorem \ref{basetheorem}, the second author and N.C. Phuc \cite{OP} proved that for capacitary spaces defined in terms of Riesz capacities one also has a strong-type bound for exponents less than one.  To state their results we recall the Riesz capacities ${\rm cap}_{\alpha,s}(\cdot)$, defined for $\alpha \in (0,n)$ and $1<s<\infty$ by
\begin{align*}
\text{cap}_{\alpha,s}(E)=\inf\{\|\varphi\|_{L^{s}(\mathbb{R}^{n})}^{s}:\varphi\geq 0,~I_{\alpha}\ast\varphi\geq 1~\text{on}~E\},\quad E\subseteq\mathbb{R}^{n},
\end{align*}
where 
\begin{align*}
I_{\alpha}(x)=\frac{1}{\gamma(\alpha)} \frac{1}{|x|^{n-\alpha}}, \quad x\in\mathbb{R}^{n},
\end{align*}
are the Riesz kernels, c.f. \cite[p.~117]{S}.  The Riesz capacities are also monotone, so that in analogy to \eqref{choquet_integral}, for an $\text{cap}_{\alpha,s}$ almost everywhere defined non-negative function $g : \mathbb{R}^n \to [0,\infty]$ we can define the Choquet integral with respect to the Riesz capacity $\text{cap}_{\alpha,s}$ by
\begin{align}\label{choquet_integral_prime}
\int_{\mathbb{R}^n}  g \; d\text{cap}_{\alpha,s} := \int_0^\infty \text{cap}_{\alpha,s}(\{ g>t\})\;dt.
\end{align}
For $0<p<+\infty$, the corresponding capacitary integration spaces are in turn defined as the set of $\text{cap}_{\alpha,s}$-quasicontinuous functions for which the quasi-norm
 \begin{align}\label{quasi-norm_prime}
\|f\|_{L^p(\text{cap}_{\alpha,s})}:=\left(\int_{\mathbb{R}^n}  |f|^p \; d \text{cap}_{\alpha,s}\right)^{1/p}
\end{align}
is finite.

With this preparation, Theorems 1.1 and 1.2 of \cite{OP} can be stated as
\begin{theorem}\label{basetheorem2}
Let $\alpha \in (0,n)$, $1<s<n/\alpha$, $p \in [1-\alpha s/n, +\infty)$, and suppose $f \in L^p(\text{cap}_{\alpha,s})$. 
\begin{itemize}
\item[(i)] If  $p>1-\frac{\alpha s}{n }$, then there exists a constant $C=C(\alpha,s,n,p)>0$ such that 
    \begin{align*}
      \int_{\mathbb{R}^{n}}\left(\mathcal{M}f\right)^{p}d{\rm cap}_{\alpha,s}\leq C\int_{\mathbb{R}^{n}}|f|^{p}d{\rm cap}_{\alpha,s}.
\end{align*}
\item[(ii)] If $p = 1- \frac{\alpha s}{n}$, then there exists a constant $C=C(\alpha,s,n)>0$ such that 
   \begin{align*}
      {\rm cap}_{\alpha,s}\left(\{x\in\mathbb{R}^{n}:\left(\mathcal{M}f(x)\right)^{p}>t\}\right)\leq \frac{C}{t^p}\int_{\mathbb{R}^{n}}|f|^{p}d{\rm cap}_{\alpha,s}.
   \end{align*}
   \end{itemize}
\end{theorem}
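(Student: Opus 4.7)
The plan is to adapt the two-step strategy that yielded Corollary \ref{maintheorem2} from Theorem \ref{maintheorem1}, now with the Riesz capacity ${\rm cap}_{\alpha,s}$ in place of the Hausdorff content. First I would introduce the centered capacitary maximal function
\[
\mathcal{M}_{{\rm cap}_{\alpha,s}} f(x) := \sup_{r>0} \frac{1}{{\rm cap}_{\alpha,s}(B(x,r))} \int_{B(x,r)} |f| \, d{\rm cap}_{\alpha,s}
\]
and prove for it a weak-type $(1,1)$ estimate on $L^1({\rm cap}_{\alpha,s})$, from which the strong-type $(p,p)$ bounds for $1 < p \leq \infty$ follow by interpolation against the trivial $L^\infty$ estimate. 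The weak-type bound rests on two familiar properties of the Riesz capacity: the doubling estimate ${\rm cap}_{\alpha,s}(B(x,2r)) \leq C\, {\rm cap}_{\alpha,s}(B(x,r))$ (immediate from ${\rm cap}_{\alpha,s}(B(x,r)) \simeq r^{n-\alpha s}$ in the assumed range $1 < s < n/\alpha$) and its strong subadditivity, so that I can essentially transplant the dyadic/Vitali argument underlying \cite[Theorem 3.1]{Chen-Spector}.

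The second step is the pointwise comparison that produces the exponent shift. Since $|B(x,r)| = \omega_n r^n$ and ${\rm cap}_{\alpha,s}(B(x,r)) \simeq r^{n-\alpha s}$, one has $|B(x,r)| \simeq {\rm cap}_{\alpha,s}(B(x,r))^{1/p_0}$ with $p_0 := 1 - \alpha s/n$, and a layer-cake computation combined with the isoperimetric-type inequality $|E| \leq C\, {\rm cap}_{\alpha,s}(E)^{1/p_0}$ should yield a pointwise bound of the form
\[
\mathcal{M} f(x) \leq C \bigl(\mathcal{M}_{{\rm cap}_{\alpha,s}}(|f|^{p_0})(x)\bigr)^{1/p_0}.
\]
Inserting this into the weak-type $(1,1)$ estimate of the first step, applied to $|f|^{p_0} \in L^1({\rm cap}_{\alpha,s})$, converts it into the endpoint weak-type bound in part (ii); part (i) then follows by interpolation with the trivial $L^\infty({\rm cap}_{\alpha,s})$ bound.

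The main obstacle I expect is the precise form of this pointwise comparison at the endpoint $p_0$, because one cannot dominate Lebesgue measure by the Riesz capacity pointwise on arbitrary subsets of a ball; the comparison is only available after passing through the layer-cake representation and invoking the capacitary isoperimetric inequality. Aligning the constants and exponents is the one place where real care is required; elsewhere the argument parallels the Hausdorff-content version of Corollary \ref{maintheorem2} almost verbatim, and the formal limit $\alpha s \to 0$ recovers the classical weak-$(1,1)$ inequality for $\mathcal{M}$.
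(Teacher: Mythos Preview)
Your overall two-step architecture is exactly the paper's: first establish the weak-type $(1,1)$ and strong-type $(p,p)$ bounds for the capacitary maximal function $\mathcal{M}_{{\rm cap}_{\alpha,s}}$ (this is Theorem~\ref{main 1}), then use a pointwise comparison $\mathcal{M}f \leq C\bigl(\mathcal{M}_{{\rm cap}_{\alpha,s}}|f|^{p_0}\bigr)^{1/p_0}$ with $p_0=1-\alpha s/n$ to descend to the Hardy--Littlewood maximal function. Your step~2 is precisely Lemma~\ref{Comparisonbetweencapmaximal} specialized to $\gamma=0$, and the layer-cake/Chebyshev manipulation you outline is what that proof does.

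The gap is in step~1. A Vitali covering argument for the weak-type $(1,1)$ bound produces disjoint balls $B_i$ with $\int_{B_i}|f|\,d{\rm cap}_{\alpha,s}>t\,{\rm cap}_{\alpha,s}(B_i)$, and after doubling you need
\[
\sum_i \int_{B_i}|f|\,d{\rm cap}_{\alpha,s}\;\leq\;C\int_{\mathbb{R}^n}|f|\,d{\rm cap}_{\alpha,s}.
\]
For Lebesgue measure this is additivity on disjoint sets. For a Choquet integral it is \emph{not} a consequence of doubling plus strong subadditivity: strong subadditivity yields sublinearity, i.e.\ the reverse inequality $\int\sum f_i\leq\sum\int f_i$. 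What you need is a quasi-additivity of ${\rm cap}_{\alpha,s}$ on disjoint balls, which for Riesz capacities is a genuine theorem essentially equivalent to the capacitary strong-type inequality, not a structural triviality. The dyadic Hausdorff argument in \cite{Chen-Spector} works because of the locality of the dyadic content (coverings of a set inside a dyadic cube can be taken inside that cube), a feature the non-local Riesz capacity lacks.

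The paper therefore does \emph{not} transplant the covering argument. It proves the weak-type bound first for functions of the special form $(I_\alpha\ast\varphi)^s$ via a local/far splitting of $\varphi$ (Lemma~\ref{main pro}): on the local piece a Vitali argument succeeds because the comparison is with $\|\varphi\|_{L^s}^s$, which \emph{is} additive on disjoint balls; the far piece is handled by a Harnack-type estimate and the definition of capacity. Passage to general $f$ then goes through the auxiliary functional $\Gamma_{\alpha,s}(|f|^{1/s})$ and Lemma~\ref{fill_in_the_gaps}. If you want your route to go through, you must either invoke a quasi-additivity theorem for ${\rm cap}_{\alpha,s}$ explicitly, or reproduce this potential-theoretic detour.
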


In light of the relation of Theorems \ref{basetheorem} and \ref{maintheorem1},  the consideration of Theorem \ref{basetheorem2} naturally suggests a strengthening in terms of a capacitary maximal function associated to the Riesz capacity.  A result in this direction has been established by D.R. Adams in \cite[p.~882]{Adams_PAMS_1988}, where he proves a weak-type estimate for the Newtonian capacity, i.e. $\alpha=1$ and $s=2$, when $p=2$.  The following result contains Adams estimate as a special case.
\begin{theorem}\label{main 1}
Let $\alpha \in (0,n)$, $1<s<n/\alpha$, $p \in [1,+\infty)$, and suppose $f \in L^p(\text{cap}_{\alpha,s})$. 
\begin{itemize}
\item[(i)] If  $p>1$, then there exists a constant $C=C(\alpha,s,n,p)>0$ such that 
  \begin{align*}
\int_{\mathbb{R}^{n}}\left(\mathcal{M}_{{\rm cap}_{\alpha,s}}f\right)^{p}d{\rm cap}_{\alpha,s}\leq C\int_{\mathbb{R}^{n}}|f|^{p}\;d{\rm cap}_{\alpha,s}.
\end{align*}
\item[(ii)] If $p = 1$, then there exists a constant $C=C(\alpha,s,n)>0$ such that 
 \begin{align*}
{\rm cap}_{\alpha,s}\left(\left\{x\in\mathbb{R}^{n}:\mathcal{M}_{{\rm cap}_{\alpha,s}}f(x)>t\right\}\right)\leq \frac{C}{t}\int_{\mathbb{R}^{n}}|f|\;d{\rm cap}_{\alpha,s}
\end{align*}
   \end{itemize}
\end{theorem}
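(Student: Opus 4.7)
The plan is to mirror the proof of Theorem \ref{maintheorem1}. Part $(i)$ follows from part $(ii)$ by interpolation with the trivial bound $\mathcal{M}_{{\rm cap}_{\alpha,s}} f(x) \leq \|f\|_{L^\infty({\rm cap}_{\alpha,s})}$, which is immediate from the monotonicity of the Choquet integral. The interpolation is executed through the layer-cake identity
\begin{align*}
\int_{\mathbb{R}^n} g^p \, d{\rm cap}_{\alpha,s} = p\int_0^\infty t^{p-1} {\rm cap}_{\alpha,s}(\{g>t\}) \, dt,
\end{align*}
together with the standard Marcinkiewicz splitting $f = f\chi_{\{|f|>t/2\}} + f\chi_{\{|f|\leq t/2\}}$, which is valid in the Choquet setting because the distribution function identity above still holds. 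So the task reduces to establishing part $(ii)$.

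For part $(ii)$ I would first introduce the dyadic capacitary maximal function
\begin{align*}
\mathcal{M}^d_{{\rm cap}_{\alpha,s}} f(x) := \sup_{Q \ni x} \frac{1}{{\rm cap}_{\alpha,s}(Q)} \int_Q |f| \, d{\rm cap}_{\alpha,s},
\end{align*}
the supremum being over dyadic cubes $Q$ containing $x$, and then establish a two-sided comparison of the level sets $\{\mathcal{M}_{{\rm cap}_{\alpha,s}} f > t\}$ and $\{\mathcal{M}^d_{{\rm cap}_{\alpha,s}} f > c t\}$ in analogy with Lemma \ref{equivoftwomaximal}. The comparison rests on the capacitary scaling ${\rm cap}_{\alpha,s}(B(x,r)) \simeq r^{n-\alpha s}$ valid for $0<r\leq 1$ (cf.\ \cite[Chapter V]{S}), which plays the role of the homogeneity $\mathcal{H}^\beta_\infty(B(x,r))=\omega_\beta r^\beta$ in the Hausdorff content setting, combined with the familiar fact that each ball is contained in a bounded number of dyadic cubes of comparable diameter, and conversely.

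With the comparison in hand, I would prove the weak-type bound for $\mathcal{M}^d_{{\rm cap}_{\alpha,s}}$ by a Calder\'on--Zygmund stopping-time argument: for $t>0$, select the maximal pairwise disjoint dyadic cubes $\{Q_j\}$ on which the capacitary average of $|f|$ exceeds $t$. These cubes cover $\{\mathcal{M}^d_{{\rm cap}_{\alpha,s}} f > t\}$, and countable subadditivity of the Riesz capacity then yields
\begin{align*}
{\rm cap}_{\alpha,s}\bigl(\{\mathcal{M}^d_{{\rm cap}_{\alpha,s}} f > t\}\bigr) \leq \sum_j {\rm cap}_{\alpha,s}(Q_j) \leq \frac{1}{t} \sum_j \int_{Q_j} |f| \, d{\rm cap}_{\alpha,s}.
\end{align*}

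The principal obstacle is controlling the final sum by $C \int_{\mathbb{R}^n} |f| \, d{\rm cap}_{\alpha,s}$. Unlike the Lebesgue integral, the Choquet integral is not additive across disjoint supports; the strong subadditivity of ${\rm cap}_{\alpha,s}$ yields only the reverse-direction inequality $\int_{\cup Q_j} |f| \, d{\rm cap}_{\alpha,s} \leq \sum_j \int_{Q_j} |f| \, d{\rm cap}_{\alpha,s}$, so one must exploit structure particular to the Riesz capacity. The natural route is the Maz'ya--Havin dual characterization of ${\rm cap}_{\alpha,s}$ in terms of nonnegative measures whose Riesz potentials lie in $L^{s'}$: pairing each $\int_{Q_j} |f| \, d{\rm cap}_{\alpha,s}$ against an optimal test measure supported in $Q_j$, one can use the disjoint supports together with $L^{s'}$-additivity to assemble the $Q_j$-sums into a single global estimate, after which re-dualization returns $\int_{\mathbb{R}^n} |f| \, d{\rm cap}_{\alpha,s}$. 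This dualization, which has no analogue in the Hausdorff content setting, is the key technical step and accounts for the dependence of $C$ on $s$.
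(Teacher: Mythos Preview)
Your interpolation argument for part $(i)$ is fine and matches the paper's approach exactly (the paper packages it as Lemma~\ref{interpolationofcapacity}). The divergence is entirely in part $(ii)$.

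You have correctly identified the obstruction: in the inequality
\[
{\rm cap}_{\alpha,s}\bigl(\{\mathcal{M}^d_{{\rm cap}_{\alpha,s}} f > t\}\bigr) \leq \frac{1}{t} \sum_j \int_{Q_j} |f| \, d{\rm cap}_{\alpha,s},
\]
the right-hand sum is not controlled by $\int_{\mathbb{R}^n}|f|\,d{\rm cap}_{\alpha,s}$ in general. But your proposed remedy does not work as stated. If $\mu_j$ are optimal test measures supported in the disjoint $Q_j$, the potentials $I_\alpha\ast\mu_j$ are \emph{not} disjointly supported, so there is no ``$L^{s'}$-additivity'' to invoke: $\|I_\alpha\ast\sum_j\mu_j\|_{L^{s'}}$ can be much larger than $\bigl(\sum_j\|I_\alpha\ast\mu_j\|_{L^{s'}}^{s'}\bigr)^{1/s'}$. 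This is exactly the nonlocality that distinguishes ${\rm cap}_{\alpha,s}$ from the dyadic Hausdorff content, where the Mel\cprime nikov packing property $\sum_j\mathcal{H}^{\beta,Q_0}_\infty(E\cap Q_j)\leq\mathcal{H}^{\beta,Q_0}_\infty(E)$ for disjoint dyadic $Q_j$ holds because optimal dyadic covers can be split according to the $Q_j$. No such splitting is available for Riesz capacity, and quasi-additivity results in the literature require Whitney-type separation that Calder\'on--Zygmund stopping cubes do not enjoy.

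The paper avoids this obstacle entirely by never attempting to sum capacitary integrals over disjoint sets. Instead it passes through the equivalence $\int_{\mathbb{R}^n}|f|\,d{\rm cap}_{\alpha,s}\approx\Gamma_{\alpha,s}(|f|^{1/s}):=\inf\{\|\varphi\|_{L^s}^s:I_\alpha\ast\varphi\geq|f|^{1/s}\}$ (Lemma~\ref{fill_in_the_gaps}), reducing the weak-type bound to the estimate ${\rm cap}_{\alpha,s}\bigl(\{\mathcal{M}_{{\rm cap}_{\alpha,s}}(I_\alpha\ast\varphi)^s>t\}\bigr)\leq Ct^{-1}\|\varphi\|_{L^s}^s$ (Lemma~\ref{main pro}). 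The latter is proved by splitting $\varphi=\varphi\chi_{B(x,4r_x)}+\varphi\chi_{B(x,4r_x)^c}$ at each point of the level set: for the local piece the strong capacitary inequality $\int(I_\alpha\ast\phi)^s\,d{\rm cap}_{\alpha,s}\leq C\|\phi\|_{L^s}^s$ converts the capacitary integral into an honest Lebesgue integral of $\varphi^s$, after which Vitali covering and genuine additivity apply; for the far piece, the Harnack-type comparability of $I_\alpha\ast\psi_x$ across $B(x,r_x)$ shows that $\varphi/t^{1/s}$ is itself admissible for the capacity of that portion of the level set. The key insight is to transfer the problem from Choquet integrals of $f$ to Lebesgue integrals of $\varphi$, where disjointness \emph{does} give additivity.
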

\noindent
Here
\begin{align*}
\mathcal{M}_{{\rm cap}_{\alpha,s}}f(x)=\sup_{r>0}\frac{1}{{\rm cap}_{\alpha,s}(B(0,1))r^{n-\alpha s}}\int_{B(x,r)}|f|\;d{\rm cap}_{\alpha,s}.
\end{align*}
In \cite{Adams_PAMS_1988}, the demonstration of a weak-type estimate is to establish an improvement to the usual Lebesgue differentiation theorem for these spaces.  From Theorem \ref{main 1}  we obtain similar results for an extended range of parameters in
\begin{corollary}\label{corollary_LDT_Riesz}
Let $\alpha \in (0,n)$, $1<s<n/\alpha$, $p \in [1,+\infty)$, and suppose $f \in L^p(\text{cap}_{\alpha,s})$.  The function
\begin{align}\label{representative_cor_riesz}
f^*(x):= \lim_{r \to 0} \frac{1}{|B(x,r)|}\int_{B(x,r)}f(y)dy
\end{align}
is well-defined and one has the convergence
\begin{align}\label{convergence_cor_riesz}
\lim_{r \to 0} \frac{1}{{\rm cap}_{\alpha,s}(B(0,1))r^{n-\alpha s}}\int_{B(x,r)}|f-f^*(x)|^p \;d\text{cap}_{\alpha,s} = 0
\end{align}
for $\text{cap}_{\alpha,s}$ almost every $x \in \mathbb{R}^n$.
\end{corollary}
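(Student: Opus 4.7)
The plan is to adapt the classical density-plus-maximal-function argument for the Lebesgue differentiation theorem, using Theorem \ref{main 1} in place of the Hardy--Littlewood inequalities together with a comparison between Lebesgue and capacitary averages on balls. Throughout I take for granted that continuous, compactly supported functions are dense in $L^p({\rm cap}_{\alpha,s})$, as is used in the Hausdorff content case for Corollary \ref{corollary_LDT}.

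The key preliminary is the pointwise domination $\mathcal{M}\phi(x) \leq C\,\mathcal{M}_{{\rm cap}_{\alpha,s}}\phi(x)$ for non-negative $\phi$. This follows from the elementary estimate
\begin{align*}
|E| \leq C(n,\alpha,s)\, r^{\alpha s}\, {\rm cap}_{\alpha,s}(E) \qquad \text{for every measurable } E \subset B(x,r),
\end{align*}
which itself is a consequence of the classical inequality ${\rm cap}_{\alpha,s}(E) \gtrsim |E|^{(n-\alpha s)/n}$ paired with ${\rm cap}_{\alpha,s}(E) \leq {\rm cap}_{\alpha,s}(B(x,r)) \sim r^{n-\alpha s}$. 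A layer-cake decomposition, together with $|B(x,r)| \sim r^n$ and ${\rm cap}_{\alpha,s}(B(x,r)) \sim r^{n-\alpha s}$, then converts this into the bound on averages.

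With this in hand, fix $\epsilon > 0$ and decompose $f = g + h$ with $g$ continuous compactly supported and $\|h\|_{L^p({\rm cap}_{\alpha,s})} < \epsilon$. Both \eqref{representative_cor_riesz} and \eqref{convergence_cor_riesz} hold pointwise for $g$ by continuity and finiteness of ${\rm cap}_{\alpha,s}$ on compact sets, so it remains to bound the contribution of $h$. For the existence of $f^*$, the oscillation $\limsup_{r\to 0}\tfrac{1}{|B(x,r)|}\int_{B(x,r)} f \, dy - \liminf_{r\to 0}\tfrac{1}{|B(x,r)|}\int_{B(x,r)} f \, dy$ is at most $2\mathcal{M}h(x) \leq 2C\,\mathcal{M}_{{\rm cap}_{\alpha,s}}h(x)$, and Theorem \ref{main 1} (weak-$(1,1)$ when $p=1$; strong-$(p,p)$ plus Choquet--Chebyshev when $p>1$) gives that the super-level set $\{\text{oscillation} > \lambda\}$ has ${\rm cap}_{\alpha,s}$-measure at most $C\epsilon^p/\lambda^p$, or the linear analogue $C\epsilon/\lambda$ at $p=1$. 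Letting $\epsilon\to 0$ for each fixed $\lambda$, then taking a countable union over $\lambda = 1/k$ via subadditivity of ${\rm cap}_{\alpha,s}$, shows the oscillation vanishes ${\rm cap}_{\alpha,s}$-a.e., yielding \eqref{representative_cor_riesz}.

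For \eqref{convergence_cor_riesz}, the same splitting together with $|g(x) - f^*(x)|^p \leq (\mathcal{M}h(x))^p \lesssim (\mathcal{M}_{{\rm cap}_{\alpha,s}}h(x))^p$ gives
\begin{align*}
\frac{1}{{\rm cap}_{\alpha,s}(B(x,r))}\int_{B(x,r)} |f - f^*(x)|^p\, d{\rm cap}_{\alpha,s} \lesssim o_r(1) + \mathcal{M}_{{\rm cap}_{\alpha,s}}(|h|^p)(x) + \bigl(\mathcal{M}_{{\rm cap}_{\alpha,s}}h(x)\bigr)^p,
\end{align*}
where $o_r(1)$ captures the continuous contribution. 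Applying the weak-$(1,1)$ estimate of Theorem \ref{main 1} to $|h|^p \in L^1({\rm cap}_{\alpha,s})$ (noting $\||h|^p\|_{L^1({\rm cap}_{\alpha,s})} = \|h\|_{L^p({\rm cap}_{\alpha,s})}^p < \epsilon^p$) and either weak-$(1,1)$ or strong-$(p,p)$ to $h$, one shows that every super-level set of the limsup has arbitrarily small capacity as $\epsilon \to 0$, hence zero capacity. The only genuinely non-routine ingredient in this proposal is the comparison inequality between Lebesgue and capacitary averages; once that is in place the remainder is a direct transcription of the classical argument.
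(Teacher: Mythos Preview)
Your proposal is correct and follows the same density-plus-maximal-function template as the paper's proof: approximate by $g\in C_c(\mathbb{R}^n)$, control the error via $\mathcal{M}_{{\rm cap}_{\alpha,s}}$, and invoke Theorem~\ref{main 1} together with countable subadditivity of the capacity. The one substantive difference is in how the existence of $f^*$ is handled: the paper simply cites the Bessel analogue in \cite[Theorem~6.2.1]{AH} for the fact that $f^*$ exists and equals $f$ ${\rm cap}_{\alpha,s}$-quasieverywhere (and then disposes of the term $|g^*(x)-f^*(x)|^p=|f-g|^p$ directly by Chebyshev), whereas you prove existence from scratch via the pointwise comparison $\mathcal{M}\phi\lesssim \mathcal{M}_{{\rm cap}_{\alpha,s}}\phi$, which you derive from the isocapacitary inequality $|E|^{(n-\alpha s)/n}\lesssim {\rm cap}_{\alpha,s}(E)$ combined with $|E|^{\alpha s/n}\lesssim r^{\alpha s}$ for $E\subset B(x,r)$. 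Your route is slightly more self-contained, at the cost of introducing this extra comparison lemma; the paper's route is shorter but leans on an external reference for the well-definedness of $f^*$.
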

\noindent
For $\alpha =1$ and $s=p$, this improves \cite[9.~The main theorem on p.~153]{Federer-Ziemer}, as the convergence of the $p$-powers with respect to the capacity $\text{cap}_{1,p}$ implies even convergence in the Lorentz space $L^{np/(n-p),p}(\mathbb{R}^n)$, for example.

As with Theorems \ref{basetheorem} and \ref{maintheorem1}, one can deduce Theorem \ref{basetheorem2} from Theorem \ref{main 1} as a result of 
\begin{corollary}\label{corollary2}
    Let $0\leq \gamma <  \beta  \leq n$, $1<s<n/\alpha$, $p \in [(n-\alpha s)/(n-\gamma s), +\infty)$, and suppose $f \in L^p(\text{cap}_{\alpha,s})$.
    \begin{itemize}
\item[(i)] If $p>\frac{n-\alpha s }{n-\gamma s }$, then there exists a constant $C=C(\alpha,\gamma,s,n,p)>0$ such that 
    \begin{align*}
      \int_{\mathbb{R}^{n}}\left(\mathcal{M}_{{\rm cap}_{\gamma,s}}f\right)^{p}d{\rm cap}_{\alpha,s} \leq  
 C \int_{\mathbb{R}^{n}}|f|^p\;d{\rm cap}_{\alpha,s}.
    \end{align*}
\item[(ii)] If $p =\frac{n-\alpha s }{n-\gamma s }$, then there exists a constant $C=C(\alpha,\gamma,s,n)>0$ such that 
   \begin{align*}
       {\rm cap}_{\alpha,s}\left(\left\{x\in\mathbb{R}^{n}:\mathcal{M}_{{\rm cap}_{\gamma,s}}f(x)>t\right\}\right) \leq \frac{C}{t^{p}} \int_{\mathbb{R}^{n}}|f|^p\;d{\rm cap}_{\alpha,s}.
   \end{align*}
   \end{itemize} 
   \end{corollary}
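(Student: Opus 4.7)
The plan is to deduce Corollary \ref{corollary2} from Theorem \ref{main 1} by the same strategy that takes Theorem \ref{maintheorem1} to Corollary \ref{maintheorem2}: establish a pointwise comparison between the two maximal functions that shifts the critical exponent from $1$ down to $p_0 := (n-\alpha s)/(n-\gamma s) \in (0,1]$, and then invoke Theorem \ref{main 1} for $\mathcal{M}_{{\rm cap}_{\alpha,s}}$ acting on $|f|^{p_0}$.

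The heart of the argument is the pointwise estimate
\begin{align*}
\bigl(\mathcal{M}_{{\rm cap}_{\gamma,s}} f(x)\bigr)^{p_0} \leq C\, \mathcal{M}_{{\rm cap}_{\alpha,s}}(|f|^{p_0})(x),
\end{align*}
which, since $p_0(n-\gamma s) = n-\alpha s$, reduces after normalization and taking a supremum in $r$ to the ball-by-ball bound
\begin{align*}
\left( \int_{B(x,r)} |f| \, d{\rm cap}_{\gamma,s} \right)^{p_0} \leq C \int_{B(x,r)} |f|^{p_0} \, d{\rm cap}_{\alpha,s}.
\end{align*}
To prove the latter I would expand both Choquet integrals via the layer-cake formula, apply a capacitary comparison of the form ${\rm cap}_{\gamma,s}(E) \leq C\,({\rm cap}_{\alpha,s}(E))^{1/p_0}$ to each level set $\{|f|>t\}\cap B$ (the analog of the Hausdorff-content inequality $\mathcal{H}^\beta_\infty(E) \leq C(\mathcal{H}^\gamma_\infty(E))^{\beta/\gamma}$ for $\beta>\gamma$ that underpins Corollary \ref{maintheorem2}), and conclude with the one-dimensional Hardy-type estimate
\begin{align*}
\left( \int_0^\infty F(t)^{1/p_0}\, dt \right)^{p_0} \leq C_{p_0} \int_0^\infty F(t)\, t^{p_0-1}\, dt
\end{align*}
for the decreasing distribution function $F(t) := {\rm cap}_{\alpha,s}(\{|f|>t\}\cap B)$, noting that the right-hand side is a constant multiple of $\int_B |f|^{p_0}\, d{\rm cap}_{\alpha,s}$ after a change of variables.

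With this pointwise bound in hand the two parts follow directly. For part (ii) at the endpoint $p = p_0$ the inclusion
\begin{align*}
\{\mathcal{M}_{{\rm cap}_{\gamma,s}} f > t\} \subseteq \{\mathcal{M}_{{\rm cap}_{\alpha,s}}(|f|^{p_0}) > C^{-1}t^{p_0}\}
\end{align*}
combined with Theorem \ref{main 1}(ii) applied to $|f|^{p_0}$ yields the desired weak-type bound. For part (i) with $p > p_0$, setting $q := p/p_0 > 1$ and invoking the strong-type $(q,q)$ bound of Theorem \ref{main 1}(i) for $|f|^{p_0} \in L^q({\rm cap}_{\alpha,s})$ gives
\begin{align*}
\int (\mathcal{M}_{{\rm cap}_{\gamma,s}} f)^p \, d{\rm cap}_{\alpha,s} \leq C \int (\mathcal{M}_{{\rm cap}_{\alpha,s}}(|f|^{p_0}))^q \, d{\rm cap}_{\alpha,s} \leq C' \int |f|^p \, d{\rm cap}_{\alpha,s}.
\end{align*}

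The main obstacle is the capacitary interpolation ${\rm cap}_{\gamma,s}(E)^{p_0} \leq C\, {\rm cap}_{\alpha,s}(E)$. In the Hausdorff-content setting the analogous bound falls out of subadditivity applied to a near-optimal covering, but for Riesz capacities it is more delicate and will likely require potential-theoretic tools---Wolff-type pointwise inequalities, the kernel factorization $I_\alpha = I_{\alpha-\gamma}*I_\gamma$, or Adams-type comparisons between Riesz capacities and Hausdorff contents. Provided this inequality holds uniformly for subsets of a fixed ball, the remainder of the argument reduces to routine one-dimensional calculations.
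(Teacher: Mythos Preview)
Your approach is correct and is exactly the one the paper takes: the paper's proof of Corollary~\ref{corollary2} simply says to repeat the proof of Corollary~\ref{maintheorem2} with Lemma~\ref{Comparisonbetweencapmaximal} and Theorem~\ref{main 1} replacing Lemma~\ref{maximaldimetionalchange} and Theorem~\ref{maintheorem1}. Lemma~\ref{Comparisonbetweencapmaximal} is precisely your pointwise comparison $\mathcal{M}_{{\rm cap}_{\gamma,s}} f(x) \leq C\bigl(\mathcal{M}_{{\rm cap}_{\alpha,s}}(|f|^{p_0})(x)\bigr)^{1/p_0}$, and its proof is the layer-cake plus Chebyshev computation you outline.

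The one place you hesitate is not an obstacle at all: the set-function inequality ${\rm cap}_{\gamma,s}(E)^{1/(n-\gamma s)} \leq C\,{\rm cap}_{\alpha,s}(E)^{1/(n-\alpha s)}$ for $0<\gamma<\alpha<n/s$ (equivalently your ${\rm cap}_{\gamma,s}(E)^{p_0} \leq C\,{\rm cap}_{\alpha,s}(E)$) is a standard result in nonlinear potential theory, stated and proved as Theorem~5.5.1 in Adams--Hedberg~\cite{AH}, and the paper invokes it directly as \eqref{cap_set_inequality}. So no Wolff inequalities or Hausdorff-content detours are needed; you may simply cite it.
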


\section{Preliminaries}\label{preliminaries}

A basic difficulty in working with capacitary integration is that the Choquet integral is not linear, and in fact, not even sublinear.  This requires some care in working with it and the associated capacitary integration spaces.  We refer the reader to the results \cite{A3, A25, C1, Cap2, C4, C11, Cerda, CerdaCollMartin,CerdaMartinSilvestre, H,H1, MS, PS,PS1,PS2, Saito,Saito1,ST,STW,STW1,Spector-PM,YangYuan} for the treatment of some technical aspects concerning capacities and application of these results which requires some delicacy.  We note that if one assumes that the capacity in consideration is monotone and strongly subadditive, then the associated Choquet integral is sublinear, and in fact, sublinearity is itself equivalent to this strong subadditivity (see \cite{PS2}, for example).  As the Hausdorff content $\mathcal{H}^{\beta}_{\infty}$ is not strongly subadditive for $\beta <n$, it is more useful to work with an equivalent set function which satisfies this strong subadditivity.   To this end we introduce the dyadic Hausdorff content associated to a cube $Q \subset \mathbb{R}^n$:
\begin{align*}
\mathcal{H}^{\beta,Q}_{\infty}(E):= \inf \left\{\sum_{i=1}^\infty l(Q_i)^\beta  : E \subset \bigcup_{i=1}^\infty Q_i, Q_i \in \mathcal{D}(Q) \right\},
\end{align*}
where $\mathcal{D}(Q)$ is the dyadic lattice generated by the cube $Q$.  These set functions are monotone and strongly subadditive, and therefore sublinear, from which we deduce
\begin{align}
     \int \sum^\infty_{j=1} f_j \;d \mathcal{H}^{\beta, Q}_\infty \leq \sum^\infty_{j=1}\int f_j \;d \mathcal{H}^{\beta,Q}_\infty,
\end{align}
see \cite[Proposition 3.5 and 3.6]{STW} for usual dyadic case and \cite[Proposition 2.6 and Proposition 2.10]{Chen-Spector} for that of a general cube $Q$. 

The equivalence of these dyadic Hausdorff contents and the Hausdorff content can be found in Proposition 2.3 in \cite{YangYuan} and Proposition 2.11 in \cite{Chen-Spector}, that there exists a constant $C_\beta>1$ such that for every $Q \subset \mathbb{R}^n$
\begin{align}\label{equivalenceofdyadic}
 \frac{1}{C_\beta} \mathcal{H}^{\beta,Q}_{\infty}(E) \leq  \mathcal{H}^{\beta}_{\infty}(E) \leq C_\beta \mathcal{H}^{\beta,Q}_{\infty}(E)
\end{align}
for all $E \subset \mathbb{R}^n$.  Therefore one finds that 
\begin{align}\label{sublinearofHbeta}
\int_{\mathbb{R}^n} \sum^\infty_{j=1} f_j \;d \mathcal{H}^\beta_\infty \leq     C_\beta^2 \sum^\infty_{j=1}\int_{\mathbb{R}^n} f_j \;d \mathcal{H}^\beta_\infty.
\end{align}


We next recall Theorem 3.1 in \cite{Chen-Spector}, the weak type $(1,1)$ estimate for dyadic maximal function with respect to the dyadic Hausdorff content, which is an easy consequence of a covering lemma of Mel\cprime nikov \cite{Mel}:  For any $t>0$ one has the inequality
\begin{align}\label{dyadicweaktypeestimate11}
    \mathcal{H}^{\beta,Q_0}_{\infty}\left(\left\{ \mathcal{M}^{\beta,Q_0}_\infty f>t\right\}\right) \leq \frac{C}{t} \int_{\mathbb{R}^n} |f|\;d\mathcal{H}^{\beta,Q_0}_{\infty}
\end{align}
for all $\mathcal{H}^{\beta}$ almost every where well-defined functions $f:\mathbb{R}^n \to \mathbb{R}$ .
\begin{remark}
Note that in \cite[Theorem 3.1]{Chen-Spector}, $f$ is assumed to be $\mathcal{H}^{\beta}_{\infty}$-quasicontinuous and satisfy 
\begin{align*}
    \int_{\mathbb{R}^n} |f| \;d \mathcal{H}^{\beta,Q_0}_\infty < \infty.  
\end{align*}
However, an examination of the proof shows these assumptions are not necessary for the weak type estimate (\ref{dyadicweaktypeestimate11}).
\end{remark}

The following result is an extension of \cite[Lemma 3]{OV}.
\begin{lemma}\label{alphabetachange}
    Let $f \geq 0$ and $0 < \gamma < \beta \leq n \in \mathbb{N}$. Then 
    \begin{align*}
        \int_{\mathbb{R}^n} f \; d\mathcal{H}^\beta_\infty \leq \omega_\beta  \left( \frac{1}{\omega_\gamma}\right)^{\frac{\beta}{\gamma}}   \frac{\beta }{\gamma} \left(  \int_{\mathbb{R}^n} f^{\frac{\gamma}{\beta}}  \; d\mathcal{H}^\gamma_\infty \right)^{\frac{\beta}{\gamma}}.
    \end{align*}
\end{lemma}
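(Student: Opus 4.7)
The plan is to combine the Cavalieri (layer-cake) representation of the Choquet integral with two auxiliary estimates: a pointwise comparison of the two Hausdorff contents at the level of sets, and a Hardy-type estimate on the half-line for non-increasing functions.

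Step 1, the set-function comparison. I would first establish that for every $E \subset \mathbb{R}^n$,
\begin{align*}
\mathcal{H}^\beta_\infty(E) \leq \omega_\beta \left( \frac{\mathcal{H}^\gamma_\infty(E)}{\omega_\gamma} \right)^{\beta/\gamma}.
\end{align*}
Given any covering $E \subset \bigcup_i B(x_i,r_i)$ with $\sum_i \omega_\gamma r_i^\gamma \leq \mathcal{H}^\gamma_\infty(E) + \varepsilon =: A$, every individual term satisfies $\omega_\gamma r_i^\gamma \leq A$, which yields the uniform pointwise bound $r_i^{\beta - \gamma} \leq (A/\omega_\gamma)^{(\beta-\gamma)/\gamma}$. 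Factoring $r_i^\beta = r_i^\gamma \cdot r_i^{\beta-\gamma}$ and summing gives $\sum_i \omega_\beta r_i^\beta \leq \omega_\beta (A/\omega_\gamma)^{\beta/\gamma}$; passing to the infimum over coverings and letting $\varepsilon \to 0$ proves the claim.

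Step 2, the layer-cake reduction. Setting $h(t) := \mathcal{H}^\gamma_\infty(\{f>t\})$, which is non-increasing, and using Step 1 inside the Choquet integral,
\begin{align*}
\int_{\mathbb{R}^n} f\, d\mathcal{H}^\beta_\infty = \int_0^\infty \mathcal{H}^\beta_\infty(\{f > t\})\, dt \leq \omega_\beta\, \omega_\gamma^{-\beta/\gamma} \int_0^\infty h(t)^{\beta/\gamma}\, dt.
\end{align*}
A direct Cavalieri computation on the right-hand side of the lemma gives $\int_{\mathbb{R}^n} f^{\gamma/\beta}\, d\mathcal{H}^\gamma_\infty = \int_0^\infty h(s^{\beta/\gamma})\, ds$, so it remains to prove the one-variable inequality
\begin{align*}
\int_0^\infty h(t)^p\, dt \leq p \left( \int_0^\infty h(s^p)\, ds \right)^p, \qquad p := \tfrac{\beta}{\gamma} > 1,
\end{align*}
valid for any non-increasing $h \geq 0$.

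Step 3, the Hardy-type inequality. The substitution $t = s^p$ turns the left-hand side into $p\int_0^\infty H(s)^p s^{p-1}\, ds$ with $H(s) := h(s^p)$ still non-increasing. Since $H$ is non-increasing, the elementary bound $s H(s) \leq \int_0^s H(u)\, du \leq M$ holds, where $M := \int_0^\infty H(u)\, du$. Hence $H(s)^{p-1} s^{p-1} \leq M^{p-1}$, and integrating $H(s)\cdot H(s)^{p-1} s^{p-1}$ against $ds$ yields $\int_0^\infty H(s)^p s^{p-1}\, ds \leq M^{p-1} \cdot M = M^p$. Unwinding the substitution and combining with Step 2 produces exactly the asserted inequality with constant $\omega_\beta\, \omega_\gamma^{-\beta/\gamma}\,(\beta/\gamma)$.

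The main work lies in Step 3: the set-function comparison in Step 1 is elementary and the layer-cake manipulation in Step 2 is bookkeeping, but identifying the correct Hardy-type inequality, and noticing that the sharp factor $\beta/\gamma$ arises from the Jacobian $p s^{p-1}$ rather than from the inner estimate $\int H^p s^{p-1}\, ds \leq M^p$ itself, is what makes the constant come out exactly as stated. If $\int_{\mathbb{R}^n} f^{\gamma/\beta}\, d\mathcal{H}^\gamma_\infty = \infty$ the inequality is trivial, so one may assume $M < \infty$ throughout.
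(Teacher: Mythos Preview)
Your proof is correct and follows essentially the same route as the paper's: the set-function comparison of Step~1 (which the paper proves via the $\ell^\gamma \hookrightarrow \ell^\beta$ inclusion $(\sum r_j^\beta)^{1/\beta}\le(\sum r_j^\gamma)^{1/\gamma}$, equivalent to your factoring argument), the layer-cake reduction, and then the key estimate $(sH(s))^{p-1}\le M^{p-1}$ combined with integrating the remaining factor of $H$. The only cosmetic difference is that you package Step~3 as an abstract Hardy-type inequality for non-increasing functions, whereas the paper stays concrete and names the bound $s\,\mathcal{H}^\gamma_\infty(\{f^{\gamma/\beta}>s\})\le\int f^{\gamma/\beta}\,d\mathcal{H}^\gamma_\infty$ as Chebyshev's inequality; since $H(s)=\mathcal{H}^\gamma_\infty(\{f^{\gamma/\beta}>s\})$ and $M=\int f^{\gamma/\beta}\,d\mathcal{H}^\gamma_\infty$, your monotonicity bound $sH(s)\le\int_0^s H\le M$ is exactly that Chebyshev estimate.
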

\begin{proof}
   Note that for every countable collection of open balls $\{ B(x_j, r_j)\}_j$ in $\mathbb{R}^n$, we have
\begin{align*}
    \left( \sum_j r_j^\beta  \right)^{\frac{1}{\beta}} \leq \left( \sum_j r_j^\gamma \right)^{\frac{1}{\gamma}}.
\end{align*}
 Hence, for every $E \subseteq \mathbb{R}^n$,
\begin{align*}
    \left(\frac{1}{\omega_{\beta} }   \mathcal{H}^\beta_\infty(E)\right) ^{\frac{1}{\beta}} \leq   \left(\frac{1}{\omega_{\gamma} }  \mathcal{H}^\gamma_\infty(E)\right)^{\frac{1}{\gamma}} 
\end{align*}
holds by the definition of the Hausdorff content. In particular, setting $E = \{ x \in \mathbb{R}^n: f(x)> t\}$, we obtain
    \begin{align*}
         \left(\frac{1}{\omega_{\beta} } \mathcal{H}^\beta_\infty(\{ x \in \mathbb{R}^n: f(x)> t\}) \right)^{\frac{1}{\beta}} \leq \left(\frac{1}{\omega_{\gamma} } \mathcal{H}^\gamma_\infty(\{ x \in \mathbb{R}^n: f(x)> t\})\right)^{\frac{1}{\gamma}} 
    \end{align*}
and so
\begin{align}\label{estimateofHbetatolevealset}
    \int_{\mathbb{R}^n} f \; d\mathcal{H}^\beta_\infty & = \int^\infty_0 \mathcal{H}^\beta_\infty( \{ x \in \mathbb{R}^n: f(x)> t\}) \;dt\\
    &\leq \omega_\beta  \left( \frac{1}{\omega_\gamma}\right)^{\frac{\beta}{\gamma}}  \int^\infty_0  \left( \mathcal{H}^\gamma_\infty (\{ x \in \mathbb{R}^n: f(x)> t\} ) \right)^{\frac{\beta} {\gamma}}\;dt \nonumber \\
    & = \omega_\beta  \left( \frac{1}{\omega_\gamma}\right)^{\frac{\beta}{\gamma}}  \frac{\beta}{\gamma} \int^\infty_0 t^{\frac{\beta}{\gamma} -1} \left( \mathcal{H}^\gamma_\infty (\{ x \in \mathbb{R}^n: f(x)> t^{\frac{\beta}{\gamma}}\} ) \right)^{\frac{\beta} {\gamma}}  \;dt.\nonumber
    \end{align}
    Applying Chebyshev's inequality, we obtain
\begin{align*}
    t \; \mathcal{H}^\gamma_\infty (\{ x \in \mathbb{R}^n: f(x)^{\frac{\gamma}{\beta}} > t\} ) \;  \leq  \int_{\mathbb{R}^n} f^{\frac{\gamma}{\beta}} \; d\mathcal{H}^\gamma_\infty
\end{align*}
and thus
\begin{align*}
    &t^{\frac{\beta}{\gamma} -1} \left( \mathcal{H}^\gamma_\infty (\{ x \in \mathbb{R}^n: f(x)> t^{\frac{\beta}{\gamma}}\} ) \right)^{\frac{\beta} {\gamma}} \\
    =& \left( t \; \mathcal{H}^\gamma_\infty (\{ x \in \mathbb{R}^n: f(x)^{\frac{\gamma}{\beta}} > t\} )  \right)^{\frac{\beta}{\gamma} -1} \mathcal{H}^\gamma_\infty (\{ x \in \mathbb{R}^n: f(x)^{\frac{\gamma}{\beta}} > t\} )\\
    \leq &  \left( \int_{\mathbb{R}^n} f^{\frac{\gamma}{\beta}} \; d\mathcal{H}^\gamma_\infty \right)^{\frac{\beta}{\gamma} -1}    \mathcal{H}^\gamma_\infty (\{ x \in \mathbb{R}^n: f(x)^{\frac{\gamma}{\beta}} > t\} )
\end{align*}
Combining this estimate with (\ref{estimateofHbetatolevealset}), we deduce that 
\begin{align*}
    \int_{\mathbb{R}^n} f \; d \mathcal{H}^\beta_\infty &\leq \omega_\beta  \left( \frac{1}{\omega_\gamma}\right)^{\frac{\beta}{\gamma}}  \frac{\beta}{\gamma}  \left( \int_{\mathbb{R}^n} f^{\frac{\gamma}{\beta}} \; d\mathcal{H}^\gamma_\infty \right)^{\frac{\beta}{\gamma} -1}  \int^\infty_0    \mathcal{H}^\gamma_\infty (\{ x \in \mathbb{R}^n: f(x)^{\frac{\gamma}{\beta}} > t\} ) \;dt\\
    & =  \omega_\beta  \left( \frac{1}{\omega_\gamma}\right)^{\frac{\beta}{\gamma}}   \frac{\beta}{\gamma}  \left( \int_{\mathbb{R}^n} f^{\frac{\gamma}{\beta}} \; d\mathcal{H}^\gamma_\infty \right)^{\frac{\beta}{\gamma} } .
 \end{align*}
    
\end{proof}

As a consequence of Lemma \ref{alphabetachange}, one can easily establish 
\begin{lemma}\label{maximaldimetionalchange}
    Let $f\geq 0$ and $0 < \gamma \leq \beta \leq n \in \mathbb{N}$. Then
    \begin{align}
        \mathcal{M}_{\mathcal{H}^\beta_\infty} f (x) \leq \frac{\beta}{\gamma} \left( \mathcal{M}_{\mathcal{H}^\gamma_\infty} \left( f^{\frac{\gamma}{\beta}} \right) (x) \right)^{\frac{\beta}{\gamma}}
    \end{align}
    holds for every $x \in \mathbb{R}^n$.
\end{lemma}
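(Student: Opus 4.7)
The plan is to apply Lemma \ref{alphabetachange} separately at each scale, to the function $f$ restricted to the ball $B(x,r)$. The case $\gamma = \beta$ is trivial since $\beta/\gamma = 1$, so I will assume $0 < \gamma < \beta$.

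Fix $x \in \mathbb{R}^n$ and $r > 0$. Applying Lemma \ref{alphabetachange} to $f \chi_{B(x,r)}$ yields
\begin{align*}
\int_{B(x,r)} f \, d\mathcal{H}^\beta_\infty \leq \omega_\beta \left(\frac{1}{\omega_\gamma}\right)^{\beta/\gamma} \frac{\beta}{\gamma} \left( \int_{B(x,r)} f^{\gamma/\beta} \, d\mathcal{H}^\gamma_\infty \right)^{\beta/\gamma}.
\end{align*}
The next step is to divide both sides by $\omega_\beta r^\beta$ and exploit the key arithmetic identity $r^\beta = (r^\gamma)^{\beta/\gamma}$, so that the constant $\omega_\beta (1/\omega_\gamma)^{\beta/\gamma}$ is exactly absorbed into the normalization of the $\gamma$-dimensional averaging factor $\omega_\gamma r^\gamma$ raised to the power $\beta/\gamma$. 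This gives
\begin{align*}
\frac{1}{\omega_\beta r^\beta} \int_{B(x,r)} f \, d\mathcal{H}^\beta_\infty \leq \frac{\beta}{\gamma} \left( \frac{1}{\omega_\gamma r^\gamma} \int_{B(x,r)} f^{\gamma/\beta} \, d\mathcal{H}^\gamma_\infty \right)^{\beta/\gamma}.
\end{align*}
Bounding the right-hand side by its supremum over $r > 0$ produces
\begin{align*}
\frac{1}{\omega_\beta r^\beta} \int_{B(x,r)} f \, d\mathcal{H}^\beta_\infty \leq \frac{\beta}{\gamma} \left( \mathcal{M}_{\mathcal{H}^\gamma_\infty}(f^{\gamma/\beta})(x) \right)^{\beta/\gamma},
\end{align*}
and since the right-hand side no longer depends on $r$, taking the supremum over $r > 0$ on the left gives the desired inequality.

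There is no real obstacle here: the lemma is essentially a pointwise rearrangement of the inequality in Lemma \ref{alphabetachange}. The only thing worth checking carefully is that the normalizing constants in the two maximal functions — $\omega_\beta r^\beta$ and $(\omega_\gamma r^\gamma)^{\beta/\gamma}$ — align exactly with the prefactor $\omega_\beta (1/\omega_\gamma)^{\beta/\gamma}$ appearing in Lemma \ref{alphabetachange}, which is precisely the reason for the choice of normalization $\omega_\beta$ in the definition of $\mathcal{M}_{\mathcal{H}^\beta_\infty}$.
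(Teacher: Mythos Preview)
Your proof is correct and follows essentially the same route as the paper's own argument: apply Lemma \ref{alphabetachange} on each ball $B(x,r)$, divide by $\omega_\beta r^\beta$ using $r^\beta=(r^\gamma)^{\beta/\gamma}$ so the constants align, bound by the $\gamma$-maximal function, and take the supremum in $r$. The only difference is cosmetic---you spell out the handling of the normalizing constants a bit more explicitly.
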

\begin{proof}
    Let $x \in \mathbb{R}^n$ and $r>0$. Then using Lemma \ref{alphabetachange}, we obtain 
    \begin{align*}
        \frac{1}{\omega_\beta r^\beta} \int_{B(x,r)} f \; d \mathcal{H}_\infty^\beta&\leq   \frac{\beta}{\gamma} \left(  
        \frac{1}{\omega_\gamma r^\gamma}  \int_{B(x,r)} f^{\frac{\gamma}{\beta}} \; d \mathcal{H}^\gamma_\infty \right)^\frac{\beta}{\gamma}\\
        &\leq \frac{\beta}{\gamma}\left(   
        \mathcal{M}_{\mathcal{H}^\gamma_\infty} (f ^{\frac{\gamma}{\beta}})(x)
        \right)^\frac{\beta}{\gamma}.
    \end{align*}
    Taking the supremum over all the balls with center $x$ in the preceding inequality we deduce that
    \begin{align*}
         \mathcal{M}_{\mathcal{H}^\beta_\infty} f (x) \leq \frac{\beta}{\gamma} \left( \mathcal{M}_{\mathcal{H}^\gamma_\infty} \left(f^{\frac{\gamma}{\beta}}\right) (x) \right)^{\frac{\beta}{\gamma}}.
    \end{align*}
\end{proof}

\begin{lemma}\label{equivoftwomaximal}
    Let $Q_0$ be a cube in $\mathbb{R}^n$. There exist constants $c,C>0$ such that
    \begin{align*}
        \mathcal{H}^\beta_\infty\left( \left\{  x \in \mathbb{R}^n : \mathcal{M}_{\mathcal{H}^\beta_\infty} f (x) >t \right\} \right) \leq C \mathcal{H}^\beta_\infty\left( \left\{  x\in \mathbb{R}^n : \mathcal{M}_\infty^{\beta, Q_0} f(x) > ct\right\} \right)
    \end{align*}
   holds for every non-negative function $f$.
\end{lemma}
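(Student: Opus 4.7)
The plan is to transfer each point of the centered level set $E_t := \{\mathcal{M}_{\mathcal{H}^\beta_\infty}f > t\}$ to a nearby dyadic cube in $\mathcal{D}(Q_0)$ that witnesses the dyadic maximal function at a comparable height, and then to control the Hausdorff content of $E_t$ through a Vitali/Mel$'$nikov-type covering argument applied to the dyadic witnesses.

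First, for each $x \in E_t$, choose $r_x > 0$ so that $\int_{B(x,r_x)} f \, d\mathcal{H}^\beta_\infty > \omega_\beta r_x^\beta t$. Cover $B(x,r_x)$ by at most $2^n$ dyadic cubes $Q_1,\dots,Q_m \in \mathcal{D}(Q_0)$ of sidelength in $[r_x, 2r_x)$. By the sub-additivity bound \eqref{sublinearofHbeta} for $\mathcal{H}^\beta_\infty$ combined with the equivalence \eqref{equivalenceofdyadic} between $\mathcal{H}^\beta_\infty$ and $\mathcal{H}^{\beta,Q_0}_\infty$ (applied on each Choquet integral), at least one cube $Q_x^*$ satisfies
\[
\frac{1}{l(Q_x^*)^\beta}\int_{Q_x^*} f\,d\mathcal{H}^{\beta,Q_0}_\infty > c\,t
\]
for some $c = c(n,\beta)>0$ depending only on $\omega_\beta$, $C_\beta$, and $2^n$. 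Thus $Q_x^* \subset F_{ct} := \{\mathcal{M}^{\beta,Q_0}_\infty f > c t\}$, and since $Q_x^*$ meets $B(x,r_x)$ and has sidelength at most $2r_x$, the point $x$ belongs to the triple $3Q_x^*$.

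Next, from the family $\{Q_x^*\}_{x\in E_t}$ extract the pairwise-disjoint sub-family $\{\tilde Q_j\}_j$ of maximal dyadic cubes; because dyadic cubes are nested or disjoint, each $Q_x^*$ lies inside some $\tilde Q_{j(x)}$, and hence $x \in 3Q_x^* \subset 3\tilde Q_{j(x)}$. Therefore $E_t \subset \bigcup_j 3\tilde Q_j$. Using \eqref{sublinearofHbeta} and the elementary estimate $\mathcal{H}^\beta_\infty(3\tilde Q_j) \leq C_n\, l(\tilde Q_j)^\beta$, we obtain $\mathcal{H}^\beta_\infty(E_t) \leq C_n'\sum_j l(\tilde Q_j)^\beta$. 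Appealing to Mel$'$nikov's dyadic covering lemma from \cite{Mel} — the very mechanism underpinning \eqref{dyadicweaktypeestimate11} — the maximality of the $\tilde Q_j$ allows one to conclude $\sum_j l(\tilde Q_j)^\beta \leq C_{n,\beta}''\,\mathcal{H}^{\beta,Q_0}_\infty\bigl(\bigcup_j \tilde Q_j\bigr) \leq C_{n,\beta}''\,\mathcal{H}^{\beta,Q_0}_\infty(F_{ct})$, and a final application of \eqref{equivalenceofdyadic} yields the claim.

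The main obstacle is the last inequality of Step 3. Because for $\beta < n$ disjoint dyadic siblings can be bundled into a cheaper parent in the dyadic content, the bound $\sum_j l(\tilde Q_j)^\beta \lesssim \mathcal{H}^{\beta,Q_0}_\infty\bigl(\bigcup_j \tilde Q_j\bigr)$ is not a formal consequence of disjointness alone; it genuinely uses the maximality of the $\tilde Q_j$ (their dyadic parents have average at most $ct$), which is precisely the configuration in which Mel$'$nikov's covering argument delivers the required lower bound on the Hausdorff content of the union.
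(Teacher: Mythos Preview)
Your first two steps---selecting for each $x\in E_t$ a dyadic witness $Q_x^*$ with average exceeding $ct$ and noting $x\in 3Q_x^*$---parallel the paper's argument. The gap is in the final step. After extracting the maximal subfamily $\{\tilde Q_j\}$ and bounding $\mathcal{H}^\beta_\infty(E_t)\lesssim\sum_j l(\tilde Q_j)^\beta$, you assert $\sum_j l(\tilde Q_j)^\beta\lesssim\mathcal{H}^{\beta,Q_0}_\infty\bigl(\bigcup_j\tilde Q_j\bigr)$. Mel\cprime nikov's lemma does not deliver this: as used in \eqref{dyadicweaktypeestimate11}, it bounds $\sum_j l(\tilde Q_j)^\beta$ by a multiple of $\frac{1}{ct}\int|f|\,d\mathcal{H}^{\beta,Q_0}_\infty$, not by the content of the union---so your route would recover the weak-type estimate of Theorem~\ref{maintheorem1}(ii) directly, but not the level-set comparison stated here. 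The justification you offer, that the dyadic parents of the $\tilde Q_j$ have average at most $ct$, is also not established: your $\tilde Q_j$ are maximal only among the \emph{selected} cubes $\{Q_x^*\}_{x\in E_t}$, a proper subfamily of the cubes with average exceeding $ct$, so the parent of a $\tilde Q_j$ may well have large average without ever having been chosen as some $Q_x^*$. Even granting the stronger maximality, the passage from $\sum_j l(\tilde Q_j)^\beta$ to $\mathcal{H}^{\beta,Q_0}_\infty(F_{ct})$ remains exactly the delicate point you flag, and it is not supplied.

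The paper avoids this entirely by never summing over cubes. It observes that $\bigcup_x 3Q^x$ is contained in $\{3Q:Q\in\mathcal{D}(Q_0),\ Q\subset L_{ct}\}$, writes each $3Q$ as a union of $3^n$ fixed translates $\tau_iQ$, and uses subadditivity together with \emph{translation invariance} of $\mathcal{H}^\beta_\infty$ to obtain
\[
\mathcal{H}^\beta_\infty(E_t)\;\leq\;\sum_{i=1}^{3^n}\mathcal{H}^\beta_\infty\Bigl(\tau_i\bigl\{Q:Q\in\mathcal{D}(Q_0),\ Q\subset L_{ct}\bigr\}\Bigr)\;=\;3^n\,\mathcal{H}^\beta_\infty(L_{ct}),
\]
the last equality holding because the union of all dyadic cubes contained in the open set $L_{ct}$ is $L_{ct}$ itself. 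This translation trick is the missing idea.
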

\begin{proof}
Define the uncentered Hausdorff content maximal function
\begin{align*} 
\tilde{\mathcal{M}}_{\mathcal{H}^\beta_\infty} f(x) \vcentcolon= \sup_{x \in B}  \frac{1}{\mathcal{H}^\beta_\infty(B)} \int_{B} |f| \;\;d\mathcal{H}^{\beta}_{\infty}.
\end{align*}
Then as this maximal function dominates the centered Hausdorff content maximal function, it suffices to show
\begin{align*}
        \mathcal{H}^\beta_\infty\left( \left\{  x \in \mathbb{R}^n : \tilde{\mathcal{M}}_{\mathcal{H}^\beta_\infty} f (x) >t \right\} \right) \leq C \mathcal{H}^\beta_\infty\left( \left\{  x\in \mathbb{R}^n : \mathcal{M}_\infty^{\beta, Q_0} f(x) > c t\right\} \right).
    \end{align*}

    For $t>0$, we let 
    \begin{align*}
        E_t &: = \{ x \in \mathbb{R}^n : \tilde{\mathcal{M}}_{\mathcal{H}^\beta_\infty} f(x) >t \}, \\
        L_t&:= \{ x \in \mathbb{R}^n: \mathcal{M}_\infty^{\beta, Q_0}f (x) >t\}.
    \end{align*}
    We claim that there exists a universal constant $c=c(\beta,n)>0$ such that for every $x \in E_t$ one can find a ball $B^x \subseteq E_t$ and a dyadic cube $Q^x$ in $\mathcal{D}(Q_0)$ such that
    \begin{align}\label{TildeQcontaindinL}
    Q^x \subseteq L_{ c t} 
\end{align}
    and 
\begin{align*}
x \in B^x \subseteq 3 Q^x,
\end{align*}
    where $3 Q^x$ denotes the cube with the same center as $Q^x$, but with side length 3 times that of $Q^x$. 
  To prove the claim, we observe that for every $x \in E_t$, the definition of the uncentered Hausdorff content maximal function yields a ball $B^x$ such that 
    \begin{align*}
        \frac{1}{ \mathcal{H}^\beta_\infty(B^x) } \int_{B^x} |f| \; d \mathcal{H}^\beta_\infty >t.
    \end{align*}
 Here we note that it is possible that $\int_{B^x} |f| \; d \mathcal{H}^\beta_\infty = \infty$, which we allow in what follows, adopting the convention that $\alpha + \infty = \infty $ and $\alpha \cdot \infty = \infty$ if $0 <\alpha \leq \infty$.
 
    Let $r_x$ denote the radius of the ball $B^x$, so that $ \mathcal{H}^\beta_\infty(B^x) = \omega_{\beta}r_x^\beta$.  For each such a ball $B^x$, there exist at most $2^n$ dyadic cubes which we enumerate as $\{ Q_k^x \}_k $ in $\mathcal{D}(Q_0)$ such that
     \begin{align}\label{equivalentsize}
       \frac{\ell(Q_k^x)}{2} < 2r_x \leq \ell(Q_k^x) \text{ for each $k$, }
    \end{align}

    \begin{align}\label{nondisjoint}
        B^x \cap Q_k^x \neq \emptyset \text{ for each $k$, }
    \end{align}
and
    \begin{align}\label{coverball}
     B^x \subseteq \bigcup\limits_k Q_k^x.
\end{align}
Using \eqref{sublinearofHbeta} and \eqref{coverball}, we obtain 
\begin{align*}
    \sum_k \int_{B^x \cap Q_k^x} |f| \; d\mathcal{H}^\beta_\infty\geq  \frac{1}{C_\beta^2}\int_{B^x} |f| \; d \mathcal{H}^\beta_\infty > \frac{t \omega_\beta r_x^\beta}{C_\beta^2} ,
\end{align*}
and therefore there exists $Q^x:= Q_k^x$ for some $k$ such that
\begin{align*}
    \int_{B^x \cap Q^x} |f| \; d\mathcal{H}^\beta_\infty \geq \frac{t \omega_\beta r_x^\beta}{C_\beta^2 2^n} .
\end{align*}
Since \eqref{equivalentsize} implies that $ r_x^\beta > \left( \frac{\ell(\Tilde{Q_x})}{4} \right)^\beta$, we obtain
\begin{align*}
    \int_{ Q^x} |f| \; d \mathcal{H}^\beta_\infty \geq \int_{B^x \cap  Q^x} |f| \; d\mathcal{H}^\beta_\infty \geq \frac{t \omega_\beta r_x^\beta}{C_\beta^2 2^n} > \frac{t \omega_\beta}{C_\beta^2 2^{n+2\beta}} \ell( Q^x)^\beta.
\end{align*}
In particular,
\begin{align*}
    \frac{1}{\ell( Q^x)^\beta} \int_{ Q^x} |f| \;d\mathcal{H}^{\beta,Q_0}_\infty > c t
\end{align*}
for 
\begin{align*}
c:=\frac{ \omega_\beta}{C_\beta^3 2^{n+2\beta}}.
\end{align*}
It follows that 
\begin{align*}
    Q^x \subseteq L_{ct},
\end{align*}
while \eqref{equivalentsize} and \eqref{nondisjoint} imply that $B^x \subseteq 3 Q^x$, which proves the claim.

It follows from the previous claim that
\begin{align*}
    \bigcup\limits_{x \in E_t} B^x \subseteq  \bigcup_{x \in E_t} 3Q^x \subseteq \{ 3Q: Q \subseteq \mathcal{D}(Q_0) \text{ and } Q \subseteq  L_{ct} \}.  
\end{align*}
The subadditivity of $\mathcal{H}^\beta_\infty $ then implies that
\begin{align*}
    \mathcal{H}^\beta_\infty (E_t) &\leq   \mathcal{H}^\beta_\infty \left(  \bigcup_{x\in E_t} B(x,r_x) \right)\\
    &\leq   \mathcal{H}^\beta_\infty \left(  \bigcup_{x \in E_t} 3 Q^x \right)    \\
    &\leq \mathcal{H}^\beta_\infty\left( \{ 3Q: Q \subseteq \mathcal{D}(Q_0) \text{ and } Q \subseteq  L_{c t} \} \right).
   \end{align*}
 For $Q \subseteq \mathcal{D}(Q_0) \text{ and } Q \subseteq  L_{ct}$ write
 \begin{align*}
 3Q = \bigcup_{i=1}^{3^n} \tau_i Q,
 \end{align*}
 for appropriate translation operators $\{\tau_i\}_{i=1}^{3^n}$.  Then subadditivity and translation invariance of $ \mathcal{H}^\beta_\infty $ implies
 \begin{align*}
 \mathcal{H}^\beta_\infty\left( \{ 3Q: Q \subseteq \mathcal{D}(Q_0) \text{ and } Q \subseteq  L_{c t} \} \right) &= \mathcal{H}^\beta_\infty\left( \{ \cup_{i=1}^{3^n} \tau_i Q: Q \subseteq \mathcal{D}(Q_0) \text{ and } Q \subseteq  L_{c t} \} \right) \\
 &\leq \sum_{i=1}^{3^n} \mathcal{H}^\beta_\infty\left( \{ \tau_i Q: Q \subseteq \mathcal{D}(Q_0) \text{ and } Q \subseteq  L_{c t} \} \right) \\
 &= 3^n  \mathcal{H}^\beta_\infty\left( \{Q: Q \subseteq \mathcal{D}(Q_0) \text{ and } Q \subseteq  L_{c t} \} \right).
 \end{align*}
The combination of the two preceding chains of inequalities yields 
\begin{align*}
 \mathcal{H}^\beta_\infty (E_t) &\leq  3^n \mathcal{H}^{\beta}_\infty\left( \{ Q: Q \subseteq \mathcal{D}(Q_0) \text{ and } Q \subseteq  L_{ct} \} \right) \nonumber\\
    &=  3^n \mathcal{H}^{\beta}_\infty\left(  L_{ct} \right), \nonumber
\end{align*}
where the last equality follows from the fact that  
\begin{align*}
   \{ Q: Q \subseteq \mathcal{D}(Q_0) \text{ and } Q \subseteq  L_{ct} \}  = L_{ct}.
\end{align*}
We thus obtain the claimed result with
\begin{align*}
     C:=3^n.
\end{align*}
\end{proof}

The next lemma is the simplest case of the Marcinkiewicz interpolation theorem for capacities.  One can argue this result by the computation of interpolation spaces for capacitary integration spaces in \cite{Cerda, CerdaCollMartin,CerdaMartinSilvestre} and an application of an abstract interpolation result for quasi-linear operators on Banach spaces, though we include a direct proof for completeness and the convenience of the reader.
\begin{lemma}\label{interpolationofcapacity}
    Let $C$ be a capacity, i.e.
\begin{enumerate}
\item  $C(\emptyset)=0$;
\item  If $E \subset F$, then $C(E)\leq C(F)$;
\item If $E \subset \cup_{i=1}^\infty E_i$ then 
\begin{align*}
C(E) &\leq \sum_{i=1}^\infty C(E_i).
\end{align*}
\end{enumerate}
Suppose T is a quasi-linear operator: \begin{align}\label{quasilinearofT}
        |T(f_1 + f_2)(x)| \leq K \left(|T(f_1)(x)| + |T(f_2)(x)| \right)
    \end{align}
    for some constant $K$ (see e.g. \cite[1.3.4 on p.~33]{Grafakos}). If there exist $A_1$, $A_2 >0$ such that 
\begin{align}\label{weaktype(11)ofC}
    C\left( \{ x\in \mathbb{R}^n : \left|T f(x) \right|  >t\} \right) \leq \frac{A_1}{t} \Vert f \Vert_{L^1(C)}
\end{align}
for all $C$-quasieverywhere defined functions $f \in L^1(C)$ and 
\begin{align}\label{strontypeinfinityofC}
    \Vert Tf \Vert_{L^\infty(C)} \leq A_2 \Vert f \Vert_{L^\infty(C)}
\end{align}
for all $C$-quasieverywhere defined functions  $f \in L^\infty(C)$, then for $1<p<\infty$ we have the estimate
\begin{align*}
    \Vert Tf \Vert_{L^p(C)} \leq A_3 \Vert f \Vert_{L^p(C)},
\end{align*}
    where $A_3$ depending only on $K$, $A_1$, $A_2$ and $p$.
\end{lemma}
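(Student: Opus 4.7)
The plan is to imitate the classical Marcinkiewicz argument, in which $f$ is split into its ``big'' and ``small'' parts at a threshold depending linearly on the level $t$. The two tools that survive the passage from measures to capacities are exactly the two we need: the layer-cake formula
\begin{align*}
\int |g|^p \, dC \;=\; p\int_0^\infty t^{p-1} \, C(\{|g|>t\}) \, dt,
\end{align*}
which holds by the very definition \eqref{choquet_integral_prime}-type of the Choquet integral together with a change of variables, and countable subadditivity of $C$. No additivity of $C$ is ever invoked on the ``geometric'' side; the only integrations by parts occur on the Lebesgue variable $t$, to which classical Fubini applies.

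First, for a fixed $t>0$ I would set $\lambda = t/(2KA_2)$ and write $f = f_1 + f_2$, where $f_1 = f\,\mathbf{1}_{\{|f|>\lambda\}}$ and $f_2 = f - f_1$. Then $\|f_2\|_{L^\infty(C)} \leq \lambda$, so \eqref{strontypeinfinityofC} gives $|Tf_2|\leq A_2\lambda = t/(2K)$ outside a $C$-null set, and quasi-linearity \eqref{quasilinearofT} forces the inclusion
\begin{align*}
\{|Tf|>t\} \;\subseteq\; \{|Tf_1|>t/(2K)\}
\end{align*}
up to a $C$-null set. Applying \eqref{weaktype(11)ofC} to $f_1$ (which lies in $L^1(C)$ since $|f_1|\leq |f|^p/\lambda^{p-1}$ pointwise) yields
\begin{align*}
C(\{|Tf|>t\}) \;\leq\; \frac{2KA_1}{t}\int_{\{|f|>\lambda\}} |f|\, dC.
\end{align*}

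The remaining step is the layer-cake computation. Multiplying by $p\,t^{p-1}$, integrating in $t$, and changing variables from $t$ to $\lambda$, the right-hand side becomes a constant multiple of
\begin{align*}
\int_0^\infty \lambda^{p-2}\Big[\lambda\, C(\{|f|>\lambda\}) + \int_\lambda^\infty C(\{|f|>s\})\,ds\Big] \, d\lambda,
\end{align*}
upon using the layer-cake identity for $\int_{\{|f|>\lambda\}}|f|\,dC$. Classical Fubini in the Lebesgue variables then collapses both terms into $c_p \|f\|_{L^p(C)}^p$ with $c_p = 1/(p-1)$, yielding the bound
\begin{align*}
\|Tf\|_{L^p(C)}^p \;\leq\; \frac{2pKA_1 (2KA_2)^{p-1}}{p-1}\, \|f\|_{L^p(C)}^p,
\end{align*}
so $A_3$ is computable explicitly.

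The only delicate point is that the inclusion of level sets above holds only $C$-quasi-everywhere rather than everywhere, but this costs nothing since it is the $C$-outer measure of the bad set that matters. Verifying the integrability prerequisites (that $f_1\in L^1(C)$ and $f_2\in L^\infty(C)$ so the hypotheses apply) is trivial from the splitting. The arithmetic with capacities is safe throughout because each instance of additivity is applied either to the Lebesgue integral in $t$, or through countable subadditivity which we explicitly assume of $C$. I do not foresee a genuine obstacle; the main care is to track the role of the quasi-linearity constant $K$ in the splitting.
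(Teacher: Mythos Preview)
Your proposal is correct and follows essentially the same argument as the paper: the same splitting $f=f_1+f_2$ at threshold $\lambda=t/(2KA_2)$, the same use of the $L^\infty$ bound to reduce $\{|Tf|>t\}$ to $\{|Tf_1|>t/(2K)\}$, the same application of the weak-type bound followed by the layer-cake identity for $\int_{\{|f|>\lambda\}}|f|\,dC$, and the same Fubini computation in the Lebesgue variable. Even your final constant $A_3 = 2pKA_1(2KA_2)^{p-1}/(p-1)$ agrees (after simplification) with the paper's.
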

\begin{proof}
    Fix $f \in L^p(C)$ and we split $f = f^\lambda + f_\lambda$, where $f^\lambda = f \chi_{ \{|f|> \frac{\lambda}{2A_2 K} \}}$ and $f_\lambda = f \chi_{ \{|f| \leq  \frac{\lambda}{2 A_2 K} \}}$. Using (\ref{quasilinearofT}) and (\ref{strontypeinfinityofC}), we obtain
\begin{align*}
   T f (x) &  \leq K \left( |T f^\lambda (x)| +|T f_\lambda (x)| \right)\\
    &\leq K \left( |T f^\lambda (x)| +A_2 \frac{\lambda}{2 A_2 K} \right)\\
    &= K |T f^\lambda (x)|+ \frac{\lambda}{2},
\end{align*}
and thus
\begin{align*}
    C \left( \{ x \in \mathbb{R}^n :|T f(x)| >\lambda \} \right) \leq   C \left(   \{ x\in \mathbb{R}^n: |T f^\lambda (x)| > \frac{\lambda}{2K} \} \right).
\end{align*}
Moreover, using inequality \eqref{weaktype(11)ofC}, we deduce that 
\begin{align*}
 C \left( \{ x \in \mathbb{R}^n :| T f(x)| >\lambda \} \right) 
 &\leq C \left( \{ x\in \mathbb{R}^n :|T f^\lambda(x) |> \frac{\lambda}{2K} \}\right)\\
& \leq \frac{2 A_1 K}{\lambda} \int_{\mathbb{R}^n} |f^\lambda | \; dC.
\end{align*}
We note that 
\begin{align*}
    \int_{\mathbb{R}^n} |f^\lambda | \; dC &=  \int^{\frac{\lambda}{2 A_2 K}}_0 C\left( \{ x \in \mathbb{R}^n: |f^\lambda (x)| > t \} \right) \;dt +  \int^\infty_{\frac{\lambda}{2 A_2 K}} C\left( \{ x \in \mathbb{R}^n: |f^\lambda (x)| > t \} \right) \;dt\\
    &= \frac{\lambda}{2 A_2 K}  C\left( \{ x \in \mathbb{R}^n: |f (x)| > \frac{\lambda}{2 A_2 K} \} \right)  +  \int^\infty_{\frac{\lambda}{2 A_2 K}} C\left( \{ x \in \mathbb{R}^n: |f (x)| > t \} \right) \;dt,
\end{align*}
and thus 
\begin{align}\label{midestimateofmaximalcapacityversion}
    & C \left( \{ x \in \mathbb{R}^n :|T f(x)| >\lambda \} \right) \\
   \leq  & \frac{2A_1 K}{\lambda} \left( \frac{\lambda}{2A_2 K}C \left( \{ x \in \mathbb{R}^n: |f(x)| > \frac{\lambda}{2 A_2 K} \} \right) 
    + \int^\infty_{\frac{\lambda}{2 A_2 K}} C\left( \{ x \in \mathbb{R}^n: |f (x)| > t \} \right) \;dt \right) .  \nonumber
\end{align}
Using the formula $ \int_{\mathbb{R}^n} \left|T f \right|^p \; d C   = p \int^\infty_0 \lambda^{p-1} C \left(\{ x\in \mathbb{R}^n:|T f(x)| > \lambda\} \right) \; d\lambda $ and the inequality \eqref{midestimateofmaximalcapacityversion}, we deduce that \begin{align} \label{combinest0capacity}
    \int_{\mathbb{R}^n} \left| T f \right|^p \; d C  & \leq  \frac{A_1 p}{A_2}   \int^\infty_0 \lambda^{p-1} C \left( \{ x \in \mathbb{R}^n: |f(x)| > \frac{\lambda}{2 A_2 K} \} \right)  \; d\lambda\\
    & +   2A_1 K p \int^\infty_0 \lambda^{p-2} \int^\infty_{\frac{\lambda}{2 A_2 K}} C\left( \{ x \in \mathbb{R}^n: |f (x)| > t \} \right) \;dt \;d\lambda. \nonumber
\end{align}
The change of variables  $s=\lambda/(2A_2K)$ yields
\begin{align}\label{combinest1capacity}
  p\int_0^\infty \lambda^{p-1} C \left( \{ x \in \mathbb{R}^n: |f(x)| > \frac{\lambda}{2 A_2 K} \} \right) \; d\lambda =(2A_2 K)^p  \int_{\mathbb{R}^n}  |f|^p \; d C
 \end{align}
while from Tonelli’s theorem and basic calculus we obtain
\begin{align}\label{combinest2capacity}
&  p  \int^\infty_0 \lambda^{p-2} \int^\infty_{\frac{\lambda}{2 A_2 K}} C\left( \{ x \in \mathbb{R}^n: |f (x)| > t \} \right) \;dt \;d\lambda\\
=& p \int^\infty_0 \int^{ 2 A_2 K t}_0 \lambda^{p-2} C\left( \{ x \in \mathbb{R}^n: |f (x)| > t \} \right) \; d \lambda \;dt \nonumber \\
 =& \frac{(2A_2K)^{p-1}}{p-1} \int_{\mathbb{R}^n}  |f|^p \; d C.\nonumber
\end{align}
Therefore, finally the combination of \eqref{combinest0capacity}, \eqref{combinest1capacity} and \eqref{combinest2capacity} yields 
\begin{align*}
 \int_{\mathbb{R}^n} \left(T f \right)^p \; d C   \leq  A_3 \int_{\mathbb{R}^n}  |f|^p \; d C,
\end{align*}
for
\begin{align*}
A_3:= \left(\frac{A_1}{A_2}(2A_2 K)^p + 2A_1K \frac{(2A_2K)^{p-1}}{p-1}\right)
\end{align*}
and the proof is complete.
\end{proof}

We next collect some useful properties regarding the Riesz capacities, we refer the readers to \cite{AH} for further details.

\begin{lemma}\label{Comparisonbetweencapmaximal}
    Let $0<\gamma <\alpha <\frac{n}{s}$, $1<s<\frac{n}{\alpha}$, $f$ be a real-valued function. Then there exists a constant $C = C(n, \alpha, \gamma, s)>0$ such that
\begin{align*}
    \mathcal{M}_{{\rm cap}_{\gamma,s}}f(x) \leq C\left(\mathcal{M}_{{\rm cap}_{\alpha,s}}f(x)\right)^{\frac{n-\gamma s}{n-\alpha s}}
\end{align*}
\end{lemma}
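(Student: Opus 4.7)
The plan is to mirror the proof of Lemma \ref{maximaldimetionalchange} in the Riesz-capacity setting, reducing the claim to the Riesz analogue of the set inequality underlying Lemma \ref{alphabetachange} and then transcribing the layer-cake/Chebyshev argument used there.

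The first step is to establish the capacity comparison
\begin{align*}
{\rm cap}_{\gamma,s}(E) \leq C\, {\rm cap}_{\alpha,s}(E)^{(n-\gamma s)/(n-\alpha s)}
\end{align*}
for every Borel $E \subseteq \mathbb{R}^n$. The scaling identity ${\rm cap}_{\beta,s}(B(x,r)) = {\rm cap}_{\beta,s}(B(0,1))\, r^{n-\beta s}$ makes this an equality, up to constants, when $E$ is a ball. To extend it to arbitrary sets one can start from an almost-minimizer $\varphi \geq 0$ with $I_\alpha * \varphi \geq 1$ on $E$ and $\|\varphi\|_{L^s}^s$ close to ${\rm cap}_{\alpha,s}(E)$, and combine the semigroup identity $I_\alpha = c\, I_{\alpha-\gamma} * I_\gamma$ with a truncation of $\varphi$ at a height calibrated to $\|\varphi\|_{L^s}$ to produce an admissible competitor for ${\rm cap}_{\gamma,s}(E)$ whose $L^s$-norm is controlled by the desired power of ${\rm cap}_{\alpha,s}(E)$; alternatively one invokes a comparison of Riesz capacities of different orders from \cite{AH}.

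With the set-level inequality in hand, the integral version is obtained by copying the computation in Lemma \ref{alphabetachange} with $\mathcal{H}^\beta_\infty$ and $\mathcal{H}^\gamma_\infty$ replaced by ${\rm cap}_{\gamma,s}$ and ${\rm cap}_{\alpha,s}$: layer-cake rewrites the $\gamma$-integral as an integral of capacities of super-level sets, the capacity comparison turns each into the $(n-\gamma s)/(n-\alpha s)$-power of the corresponding $\alpha$-capacity, the change of variables $t = u^{(n-\gamma s)/(n-\alpha s)}$ absorbs the mismatch in exponents, and Chebyshev factors out a fixed power of the full $\alpha$-integral. The result is
\begin{align*}
\int g\, d{\rm cap}_{\gamma,s} \leq C\left(\int g^{(n-\alpha s)/(n-\gamma s)}\, d{\rm cap}_{\alpha,s}\right)^{(n-\gamma s)/(n-\alpha s)}.
\end{align*}
Applied on the ball $B(x,r)$ with $g = |f|$ and divided through by ${\rm cap}_{\gamma,s}(B(0,1))\, r^{n-\gamma s}$, the left-hand side becomes a term in the definition of $\mathcal{M}_{{\rm cap}_{\gamma,s}}f(x)$, while the scaling ${\rm cap}_{\alpha,s}(B(0,1))\, r^{n-\alpha s}$ lets one recognise the right-hand side as a power of an $\alpha$-capacity ball-average. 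Taking the supremum over $r>0$ converts this into the claimed pointwise comparison of maximal functions.

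The main obstacle is the first step. For Hausdorff contents, the analogous estimate drops out of the covering definition, whereas Riesz capacities are defined through minimization and require honest nonlinear potential-theoretic input to yield the corresponding power-law comparison; the semigroup/truncation approach sketched above is one route, but verifying that the truncation level and the resulting $L^s$-norm track the exponent $(n-\gamma s)/(n-\alpha s)$ is where the technical care is concentrated. Once this step is granted, the remainder is a direct transcription of the Hausdorff content arguments from Section \ref{preliminaries}.
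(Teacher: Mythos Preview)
Your proposal is correct and follows essentially the same route as the paper: the paper invokes \cite[Theorem 5.5.1]{AH} directly for the set-level comparison ${\rm cap}_{\gamma,s}(E)^{1/(n-\gamma s)}\leq C_1\,{\rm cap}_{\alpha,s}(E)^{1/(n-\alpha s)}$ (so your semigroup/truncation sketch is unnecessary), and then carries out exactly the layer-cake plus Chebyshev computation you describe, dividing by the ball capacity and taking the supremum at the end. Note that both your argument and the paper's proof actually yield the bound $\mathcal{M}_{{\rm cap}_{\gamma,s}}f(x)\leq C\bigl(\mathcal{M}_{{\rm cap}_{\alpha,s}}|f|^{(n-\alpha s)/(n-\gamma s)}(x)\bigr)^{(n-\gamma s)/(n-\alpha s)}$, matching the Hausdorff analogue in Lemma~\ref{maximaldimetionalchange}.
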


\begin{proof}

We begin with an inequality between capacities of different exponents which can be found in \cite[Theorem 5.5.1 on p.~148]{AH}: There exists a constant $C_1= C_1(n,\alpha, s ,\gamma)>0$ such that
\begin{align}\label{cap_set_inequality}
{\rm cap}_{\gamma,s}(E)^{\frac{1}{n-\gamma s}}\leq C_1 {\rm cap}_{\alpha,s}(E)^{\frac{1}{n-\alpha s}},
\end{align}
which is valid for $0<\gamma <\alpha <\frac{n}{s}$ and $1<s<\frac{n}{\alpha}$. The observation that ${\rm cap}_{0,s}(E) \equiv c|E|$ for some constant $c$ and the usual Sobolev relation between the Lebesgue measure and capacity (which this inequality is an extension of) allow us to extend the range of exponents to $0\leq \gamma <\alpha <\frac{n}{s}$.

By \eqref{cap_set_inequality} and a change of variables we have
\begin{align*}
\int_{B(x,r)}|f| \;d{\rm cap}_{\gamma,s}&\leq C_{1}^{\frac{n-\gamma s}{n-\alpha s}}\int_{0}^{\infty}{\rm cap}_{\alpha,s}(\{  y \in \mathbb{R}^n:|f|\chi_{B(x,r)}>\lambda\})^{\frac{n-\gamma s}{n-\alpha s}} \;d\lambda\\
&=\frac{n-\gamma s}{n-\alpha s} C_{1}^{\frac{n-\gamma s}{n-\alpha s}}\int_{0}^{\infty}{\rm cap}_{\alpha,s}(\{  y \in \mathbb{R}^n:|f|^{\frac{n-\alpha s}{n-\gamma s}}\chi_{B(x,r)}> \lambda\})^{\frac{n-\gamma s}{n-\alpha s}} \lambda^{\frac{n-\gamma s}{n-\alpha s}-1}\; d\lambda.
\end{align*}
The validity of Chebychev's inequality for the Choquet integral implies the estimate
\begin{align*}
   {\rm cap}_{\alpha,s}(\{y\in \mathbb{R}^n: |f|^{\frac{n-\alpha s}{n-\gamma s}}\chi_{B(x,r)}>\lambda\})^{\frac{n-\gamma s}{n-\alpha s}-1} \leq \frac{1}{\lambda^{\frac{n-\gamma s}{n-\alpha s}-1}} \left( \int_{B(x,r)}|f|^{\frac{n-\alpha s}{n-\gamma s}} \;d{\rm cap}_{\alpha,s}\right)^{\frac{n-\gamma s}{n-\alpha s}-1},
\end{align*}
and therefore
\begin{align*}
\int_{B(x,r)}&|f| \;d{\rm cap}_{\gamma,s} \\
&\leq \frac{n-\gamma s}{n-\alpha s} C_{1}^{\frac{n-\gamma s}{n-\alpha s}}\left(\int_{B(x,r)}|f|^{\frac{n-\alpha s}{n-\gamma s}} \;d{\rm cap}_{\alpha,s}\right)^{\frac{n-\gamma t}{n-\alpha s}-1}\int_{0}^{\infty}{\rm cap}_{\alpha,s}(\{ y \in \mathbb{R}^n:|f|^{\frac{n-\alpha s}{n-\gamma s}}\chi_{B(x,r)}>\lambda\})\;d\lambda\\
&=\frac{n-\gamma s}{n-\alpha s} C_{1}^{\frac{n-\gamma s}{n-\alpha s}}\left(\int_{B(x,r)}|f|^{\frac{n-\alpha s}{n-\gamma s}} \;d{\rm cap}_{\alpha,s}\right)^{\frac{n-\gamma s}{n-\alpha s}}.
\end{align*}

As a consequence, we for any $r>0$ we have 
\begin{align*}
\frac{1}{{\rm cap}_{\gamma,s}(B(0,1))r^{n-\gamma s}}\int_{B(x,r)}|f| \;d{\rm cap}_{\gamma,s}\leq C \left(\mathcal{M}_{{\rm cap}_{\alpha,s}}|f|^{\frac{n- \alpha s}{n-\gamma s}}(x)\right)^{\frac{n-\gamma s}{n-\alpha s}}.
\end{align*}
where 
\begin{align*}
C:=\frac{n-\gamma s}{n-\alpha s} C_{1}^{\frac{n-\gamma s}{n-\alpha s}} \frac{{\rm cap}_{\alpha,s}(B(0,1))^\frac{n-\alpha s}{n-\gamma s}}{{\rm cap}_{\gamma,s}(B(0,1))}.
\end{align*}
The conclusion follows by taking the supremum in $r$.
\end{proof}

\section{Proofs of the Main Results}
We now prove Theorem \ref{maintheorem1}.
\begin{proof}
   We first prove $(ii)$.  Let $ Q_0 = [0,1]^n  \subseteq \mathbb{R}^n$.  Then Lemma \ref{equivoftwomaximal} and inequality \eqref{dyadicweaktypeestimate11} imply that there exists a constant $A_1>0$ such that
\begin{align*}
 \mathcal{H}^\beta_\infty \left( \{ x \in \mathbb{R}^n : \mathcal{M}_{\mathcal{H}^\beta_\infty} f(x) >\lambda \} \right) 
&\leq C  \mathcal{H}^\beta_\infty \left(\{x \in\mathbb{R}^n : \mathcal{M}_\infty^{\beta,Q_0} f(x) >  c\lambda \} \right)\\
& \leq \frac{A_1}{\lambda} \int_{\mathbb{R}^n} |f | \; d\mathcal{H}^{\beta }_\infty.
\end{align*}
This completes the demonstration of $(ii)$.  To prove $(i)$, we note that the operator $T = \mathcal{M}_{\mathcal{H}^\beta_\infty}$ is quasisublinear, which follows from the computation
\begin{align*}
      \frac{1}{\omega_\beta r^\beta}\int_{B(x,r)} |f_1 +f_2| \; d\mathcal{H}^\beta_\infty \leq2 \left(  \frac{1}{\omega_\beta r^\beta} \int_{B(x,r)} |f_1 |  \; d\mathcal{H}^\beta_\infty+  \frac{1}{\omega_\beta r^\beta} \int_{B(x,r)}  |f_2 | \; d\mathcal{H}^\beta_\infty \right),
  \end{align*}
so that
\begin{align*}
    \mathcal{M}_{\mathcal{H}^\beta_\infty} (f_1 + f_2) (x) &  \leq 2 \left( \mathcal{M}_{\mathcal{H}^\beta_\infty} f_1 (x) +\mathcal{M}_{\mathcal{H}^\beta_\infty} f_2 (x) \right).
\end{align*}  

We claim that
\begin{align*}
    \Vert \mathcal{M}_{\mathcal{H}^\beta_\infty} f \Vert_{L^\infty (\mathcal{H}^\beta_\infty)} \leq \Vert  f \Vert_{L^\infty(\mathcal{H}^\beta_\infty)}.
\end{align*}
If so, an application of Lemma \ref{interpolationofcapacity} with $T = \mathcal{M}^\beta_\infty$ and $C= \mathcal{H}^\beta_\infty$ yields 
\begin{align*}
    \Vert \mathcal{M}_{\mathcal{H}^\beta_\infty} f \Vert_{L^p (\mathcal{H}^\beta_\infty)} \leq A_3 \Vert f \Vert_{L^p(\mathcal{H}^\beta_\infty)},
\end{align*}
which is the desired result.  To this end,  we observe that
\begin{align*}
\frac{1}{\mathcal{H}^\beta_\infty(B(x,r))} \int_{B(x,r)} |f|\;d\mathcal{H}^\beta_\infty &=\frac{1}{\mathcal{H}^\beta_\infty(B(x,r))} \int_0^\infty  \mathcal{H}^\beta_\infty(\{y \in B(x,r): |f(y)|>t\})\;dt \\
&=\frac{1}{\mathcal{H}^\beta_\infty(B(x,r))} \int_0^{\|f\|_{L^\infty(\mathcal{H}^\beta_\infty)}}  \mathcal{H}^\beta_\infty(\{y \in B(x,r): |f(y)|>t\})\;dt \\
&\leq\|f\|_{L^\infty(\mathcal{H}^\beta_\infty)},
\end{align*}
as
\begin{align*}
\mathcal{H}^\beta_\infty(\{y \in B(x,r): |f(y)|>t\}) = 0
\end{align*}
for $t>\|f\|_{L^\infty(\mathcal{H}^\beta_\infty)}$, since
\begin{align*}
\|f\|_{L^\infty(\mathcal{H}^\beta_\infty)} := \inf\{ \lambda>0 : \mathcal{H}^\beta_\infty(\{y : |f(y)|>\lambda\}) = 0\}.
\end{align*}
Taking the supremum over radii and in $x$ yields the claim and the proof is complete.
\end{proof}

We next commence with the 
\begin{proof}[Proof of Corollary \ref{corollary_LDT}]
Let $f \in L^p(\mathcal{H}^\beta_\infty)$.  For any $g \in C_c(\mathbb{R}^n)$, one has
\begin{align*}
\lim_{r \to 0} \frac{1}{\omega_\beta r^\beta}\int_{B(x,r)}|g-g^*(x)|^p \;d\mathcal{H}^\beta_\infty &= 0,\\
\frac{1}{\omega_\beta r^\beta}\int_{B(x,r)}|g^*(x)-f^*(x)|^p \;d\mathcal{H}^\beta_\infty&=|g^*(x)-f^*(x)|^p,\\
\limsup_{r \to 0} \frac{1}{\omega_\beta r^\beta}\int_{B(x,r)}|f-g|^p \;d\mathcal{H}^\beta_\infty  &\leq \mathcal{M}_{\mathcal{H}^\beta_\infty} |f-g|^p
\end{align*}
and therefore by quasisubadditivity of $t \mapsto t^p$ and of the Choquet integral we have
\begin{align*}
\limsup_{r\to 0}\frac{1}{\omega_\beta r^\beta}\int_{B(x,r)}&|f-f^*(x)|^p \;d\mathcal{H}^\beta_\infty \\
&\leq C\left(\mathcal{M}_{\mathcal{H}^\beta_\infty} |f-g|^p +|g^*(x)-f^*(x)|^p\right).
\end{align*}
Thus for any $\lambda>0$, using subadditivity of the capacity, the weak-type estimate obtained in Theorem \ref{maintheorem1}, and Chebychev's inequality, we find
\begin{align*}
\mathcal{H}^\beta_\infty\left( \left\{ \limsup_{r\to 0}\frac{1}{\omega_\beta r^\beta}\int_{B(x,r)}|f-f^*(x)|^p>\lambda\right\} \right) \leq \frac{C'}{\lambda^p} \|f-g\|_{L^p(\mathcal{H}^\beta_\infty)}^p.
\end{align*}
By density of $C_c(\mathbb{R}^n)$ in $L^p(\mathcal{H}^\beta_\infty)$ we conclude that for any $\lambda>0$
\begin{align*}
    \mathcal{H}^\beta_\infty\left( \left\{ \limsup_{r\to 0}\frac{1}{\omega_\beta r^\beta}\int_{B(x,r)}|f-f^*(x)|^p>\lambda\right\} \right)=0.
\end{align*}
But then writing
\begin{align*}
    &\left\{ \limsup_{r\to 0}\frac{1}{\omega_\beta r^\beta}\int_{B(x,r)}|f-f^*(x)|^p>0\right\} \\
    &= \bigcup_{n \in \mathbb{N}} \left\{ \limsup_{r\to 0}\frac{1}{\omega_\beta r^\beta}\int_{B(x,r)}|f-f^*(x)|^p>\frac{1}{n}\right\},
\end{align*}
by countable subadditivity of the capacity we find that
\begin{align*}
  \mathcal{H}^\beta_\infty\left( \left\{ \limsup_{r\to 0}\frac{1}{\omega_\beta r^\beta}\int_{B(x,r)}|f-f^*(x)|^p>0\right\} \right)=0,
\end{align*}
which along with the fact that $f^*$ is $\mathcal{H}^\beta_\infty$ quasieverywhere equal to the limit of the averages completes the proof.
\end{proof}

   We now prove Corollary \ref{maintheorem2}.
\begin{proof}
Without loss of generality, we may assume that $f \geq 0$. \\

(i) Let $p>\frac{\gamma}{\beta}$. Applying Lemma \ref{maximaldimetionalchange} and Theorem \ref{maintheorem1} $(i)$, we deduce that there exists a constant $C = C(\gamma, \beta, p, n)$ such that 
\begin{align*}
    \int_{\mathbb{R}^n} \left( \mathcal{M}_{\mathcal{H}^\beta_\infty} f \right)^p \; d \mathcal{H}^\gamma_\infty &\leq \left( \frac{\beta}{\gamma}\right)^p \int_{\mathbb{R}^n} \left( \mathcal{M}_{\mathcal{H}^\gamma_\infty} f ^{\frac{\gamma}{\beta}}  \right)^{ \frac{\beta}{\gamma}p} \; d \mathcal{H}^\gamma_\infty\\
    &\leq C \int_{\mathbb{R}^n} f^p \; d\mathcal{H}^\gamma_\infty,
\end{align*}
which completes the proof.

(ii) Let $p = \frac{\gamma}{\beta}$. An application of Lemma \ref{maximaldimetionalchange} yields
\begin{align*}
    \mathcal{H}^\gamma_\infty \left( \{ x\in \mathbb{R}^n: \mathcal{M}_{\mathcal{H}^\beta_\infty} f(x) >t  \}   \right)&\leq   \mathcal{H}^\gamma_\infty \left( \left\{ x\in \mathbb{R}^n: \mathcal{M}_{\mathcal{H}^\gamma_\infty} |f|^{\frac{\beta}{\gamma}} (x) >  \left( \frac{\gamma}{\beta} t\right)^{\frac{\beta}{\gamma}} \right\}   \right) \\
    &=\mathcal{H}^\gamma_\infty \left( \left\{ x\in \mathbb{R}^n: \mathcal{M}_{\mathcal{H}^\gamma_\infty} |f|^{p} (x) >  \left( \frac{\gamma}{\beta} t\right)^{p} \right\}   \right) \nonumber
\end{align*}
for $p=\frac{\beta}{\gamma}$.  The desired inequality follows from Theorem \ref{maintheorem1} $(ii)$ applied to the function $|f|^p$.
\end{proof}

\begin{lemma}\label{fill_in_the_gaps}
    Let $0 < \alpha < n$, $1 <s <\infty$ and $\Gamma_{\alpha,s}$ be defined as 
    \begin{align*}
        \Gamma_{\alpha,s} (f) := \inf \{\Vert \varphi \Vert_{L^s(\mathbb{R}^n)}^s : I_\alpha \ast \varphi (x) \geq |f(x)| \text{ for all }  x \in \mathbb{R}^n\}
    \end{align*}
    for every $f: \mathbb{R}^n \to \mathbb{R}$. Then there exists a constant $C=C(s)>0$ such that
    \begin{align}\label{equiv_norm}
        \Gamma_{\alpha, s} (f) \leq C \int_{\mathbb{R}^n} |f |^s \; d {\rm cap}_{\alpha,s}.
    \end{align}
     
\end{lemma}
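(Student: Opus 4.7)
The plan is to build an almost-admissible competitor $\varphi$ for the infimum defining $\Gamma_{\alpha,s}(f)$ by combining extremal capacitary functions for the dyadic level sets of $|f|$, using a pointwise \emph{supremum} (rather than a sum) so that the pointwise inequality $(\sup_k g_k)^s \le \sum_k g_k^s$ for non-negative $g_k$ lines up exactly with the layer-cake formula for $\int |f|^s\, d{\rm cap}_{\alpha,s}$.

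After reducing to the case in which the right-hand side is finite (so that $\mu_k := {\rm cap}_{\alpha,s}(E_k) < \infty$ for every $k \in \mathbb{Z}$, where $E_k := \{|f| > 2^k\}$), I would invoke the definition of ${\rm cap}_{\alpha,s}$ to pick, for each $k$, a non-negative $\varphi_k$ with $I_\alpha \ast \varphi_k \ge 1$ on $E_k$ and $\|\varphi_k\|_{L^s}^s \le 2\mu_k$.

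I would then set $\varphi(x) := \sup_{k \in \mathbb{Z}} 2^{k+1}\varphi_k(x)$. Admissibility is verified pointwise: if $|f(x)| \in (2^j, 2^{j+1}]$ then $x \in E_j$, so $\varphi \ge 2^{j+1}\varphi_j$ and hence $I_\alpha\ast\varphi(x) \ge 2^{j+1}I_\alpha \ast \varphi_j(x) \ge 2^{j+1} \ge |f(x)|$; at points where $|f|=0$ the condition is automatic. To estimate $\|\varphi\|_{L^s}^s$, I would apply $\varphi^s \le \sum_k 2^{(k+1)s}\varphi_k^s$ pointwise and integrate term-by-term to obtain $\|\varphi\|_{L^s}^s \le 2^{s+1}\sum_k 2^{ks}\mu_k$.

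The final step is to identify the dyadic sum with the Choquet integral: by layer cake together with the substitution $\lambda = t^s$ one has $\int |f|^s\, d{\rm cap}_{\alpha,s} = s\int_0^\infty t^{s-1} {\rm cap}_{\alpha,s}(\{|f|>t\})\, dt$, and the monotonicity of $t \mapsto {\rm cap}_{\alpha,s}(\{|f|>t\})$ gives ${\rm cap}_{\alpha,s}(\{|f|>t\}) \ge \mu_k$ on $(2^{k-1}, 2^k]$, which integrates to $(1-2^{-s})\sum_k 2^{ks}\mu_k \le \int |f|^s\, d{\rm cap}_{\alpha,s}$. Combining yields the claim with $C(s) = 2^{2s+1}/(2^s-1)$. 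The main conceptual obstacle is to resist the natural impulse to define $\varphi$ as the \emph{sum} $\sum_k 2^k\varphi_k$: this forces the use of Minkowski's inequality and gives the much weaker bound $(\sum_k 2^k\mu_k^{1/s})^s$, which need not be controlled by $\sum_k 2^{ks}\mu_k$ once $s>1$. The supremum construction trades this $(\sum a_k)^s$ quantity for a $\sum a_k^s$ quantity, which is exactly of the right homogeneity to match the Choquet integral.
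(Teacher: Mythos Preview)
Your argument is correct and follows essentially the same strategy as the paper's proof: a dyadic decomposition of the level sets $E_k=\{|f|>2^k\}$, a competitor $\varphi$ built as the pointwise \emph{supremum} of (scaled) near-extremal capacitary functions for the $E_k$, the key pointwise inequality $(\sup_k g_k)^s\le\sum_k g_k^s$, and the layer-cake comparison $(1-2^{-s})\sum_k 2^{ks}\mu_k\le\int|f|^s\,d{\rm cap}_{\alpha,s}$. The paper packages the first steps as a proof of countable $\sigma$-subadditivity of $\Gamma_{\alpha,s}$ restricted to the annuli $K_j=\{2^j\le|f|\le 2^{j+1}\}$ and then bounds $\Gamma_{\alpha,s}(f\chi_{K_j})$ by $2^{(j+1)s}{\rm cap}_{\alpha,s}(E_j)$, but this is exactly your construction rewritten; your presentation is in fact a bit more direct. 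One small cosmetic point: the choice $\|\varphi_k\|_{L^s}^s\le 2\mu_k$ is not literally available when $\mu_k=0$ but $E_k\neq\emptyset$; replacing $2\mu_k$ by $\mu_k+\varepsilon\,2^{-|k|-(k+1)s}$ and letting $\varepsilon\to 0$ fixes this with no change to the final constant.
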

\begin{proof}
    It is clear that 
\begin{align}\label{subadditive1}
    \Gamma_{\alpha,s} (f_1) \leq \Gamma_{\alpha,s} (f_2) \text{ if } |f_1| \leq |f_2| 
\end{align}
    and  
    \begin{align}\label{subaddtive2}
        \Gamma_{\alpha,s} (\chi_E) =  {\rm cap}_{\alpha,s} (E) \text{ for all } E \subseteq \mathbb{R}^n.
    \end{align}
Let $f:\mathbb{R}^n \to \mathbb{R}$ be given and without loss of generality we assume that the right hand side of \eqref{equiv_norm} is finite.  Define
\begin{align*}
K_j &:= \{ x\in \mathbb{R}^n: 2^j \leq |f(x)| \leq 2^{j+1}\},\\
E_j &:= \{ x\in \mathbb{R}^n: 2^j \leq |f(x)|\},
    \end{align*}
    and note that
    \begin{align}\label{subaddtivecon2}
    \int_{\mathbb{R}^n} |f|^s \; d  {\rm cap}_{\alpha,s} & =  \sum^\infty_{j=-\infty}s \int^{2^{j}}_{2^{j-1}} t^{s-1 }  {\rm cap}_{\alpha,s} (\{x \in \mathbb{R}^n: |f(x)|>t \}) \;dt\\ 
&\geq   \sum^\infty_{j=-\infty} 2^{js} (1-2^{-s})   {\rm cap}_{\alpha,s}  (\{x \in \mathbb{R}^n: \nonumber|f(x)|\geq 2^{j} \}) \\\nonumber
& = (1-2^{-s})  \sum^\infty_{j=-\infty} 2^{sj}  \;{\rm cap}_{\alpha,s} (E_{j}). \nonumber
\end{align}

The preceding inequality and the chain of inequalities
\begin{align*}
\Gamma_{\alpha, s} (f \chi_{K_j}) \leq 2^{j+1} \Gamma_{\alpha, s} (\chi_{K_j}) = 2^{j+1} {\rm cap}_{\alpha,s} (\chi_{K_j}) \leq 2^{j+1} {\rm cap}_{\alpha,s} (\chi_{E_j})
\end{align*}
yields that $\Gamma_{\alpha, s} (f \chi_{K_j})$ is finite for each $j$.  Therefore, for any $\varepsilon >0$, the definition of $\Gamma_{\alpha,s}$ implies that for each $j$ we can find  $\varphi_j$ such that 
\begin{align*}
    I_\alpha \ast\varphi_j \geq |f \chi_{K_j}|
\end{align*}
and 
\begin{align*}
    \Vert \varphi_j \Vert_{L^s (\mathbb{R}^n)}^s \leq \Gamma_{\alpha, s} (f \chi_{K_j}) + \frac{\varepsilon}{2^j}.
\end{align*}
Set $\varphi : = \sup_j \varphi_j$, so that
\begin{align*}
    I_\alpha \ast \varphi \geq f \chi_{\cup_j K_j}
\end{align*}
and 
\begin{align*}
    \Vert \varphi \Vert_{L^s(\mathbb{R}^n)}^s \leq \sum_j \Gamma_{\alpha,s} (f \;\chi_{\cup_j K_j}) + \varepsilon.
\end{align*}
It follows that 
\begin{align}\label{subaddtive3}
    \Gamma_{\alpha,s} (f  \chi_{\cup_j K_j}) \leq \sum_j \Gamma_{\alpha,s } (f \chi_{K_j}).
\end{align}
 The combination of \eqref{subadditive1}, \eqref{subaddtive2}, and \eqref{subaddtive3} yields
    \begin{align}\label{subaddtivecon1}
        \Gamma_{\alpha,s}(f) & \leq \sum_{j= - \infty}^\infty \Gamma_{\alpha,s} ( f \chi_{K_j})\\\nonumber
        &\leq \sum_{j= - \infty}^\infty 2^{(j+1)s } \Gamma_{\alpha,s} (\chi_{K_j})\\& \leq \sum_{j= - \infty}^\infty 2^{(j+1)s }  {\rm cap}_{\alpha,s} (E_j).\nonumber
    \end{align}
Finally, the inequalities \eqref{subaddtivecon2} and
 \eqref{subaddtivecon1} together imply
\begin{align*}
    \Gamma_{\alpha,s} (f) \leq C \int_{\mathbb{R}^n} |f|^s \; d  {\rm cap}_{\alpha,s}
\end{align*}
for 
\begin{align*}
C:= \frac{2^s}{1-2^{-s}}.
\end{align*}
The proof is complete.
\end{proof}

The proof of Theorem \ref{main 1} relies on the strong capacitary inequality and a weak-type estimate we establish in the following lemma, which is in the spirit of \cite[Lemma 6.2.2 on p.~159]{AH}.

\begin{lemma}\label{main pro}
Let $0<\alpha<n$, $1<s<n/\alpha$, and $\varphi\geq 0$ be a measurable function on $\mathbb{R}^{n}$. Denote by  
\begin{align*}
E_{t}=\{x\in\mathbb{R}^{n}:{\mathcal M}_{{\rm cap}_{\alpha,s}}\left((I_{\alpha}\ast\varphi)^{s}\right)(x)>t\},\quad t>0.
\end{align*}
Then there exists a constant depending only on $C = C(n,\alpha,s)>0$ such that
\begin{align*}
{\rm cap}_{\alpha,s}(E_{t})\leq \frac{C}{t}\|\varphi\|_{L^{s}(\mathbb{R}^{n})}^{s}
\end{align*}
holds for all $t>0$.
\end{lemma}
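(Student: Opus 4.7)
The plan is to reduce the weak-type estimate to two simpler ones via the pointwise majorization
\[
\mathcal{M}_{{\rm cap}_{\alpha,s}}\bigl((I_\alpha\ast\varphi)^s\bigr)(x) \leq C_1\, M_{\alpha s}(\varphi^s)(x) + C_2\, (I_\alpha\ast\varphi)^s(x),
\]
where $M_\beta g(x):=\sup_{r>0} r^{\beta-n}\int_{B(x,r)} g\, dy$ denotes the fractional maximal operator. To establish this pointwise bound I would fix $x$ and $r>0$ and split $\varphi = \varphi^{\mathrm{in}} + \varphi^{\mathrm{out}}$ with $\varphi^{\mathrm{in}} := \varphi\chi_{B(x,2r)}$. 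An elementary geometric comparison shows that $|y-z|\geq |x-z|/2$ whenever $y\in B(x,r)$ and $|z-x|>2r$, which gives $I_\alpha\ast\varphi^{\mathrm{out}}(y)\leq 2^{n-\alpha}(I_\alpha\ast\varphi)(x)$ uniformly on $B(x,r)$. Combining this with $(a+b)^s \leq 2^{s-1}(a^s+b^s)$, integrating against ${\rm cap}_{\alpha,s}$, and applying the strong capacitary inequality to the local piece $\varphi^{\mathrm{in}}$ yields
\[
\int_{B(x,r)} (I_\alpha\ast\varphi)^s \, d{\rm cap}_{\alpha,s} \leq C\int_{B(x,2r)} \varphi^s\, dy + C\,(I_\alpha\ast\varphi)^s(x)\, r^{n-\alpha s}.
\]
Dividing by ${\rm cap}_{\alpha,s}(B(0,1))\, r^{n-\alpha s}$ and taking $\sup_{r>0}$ yields the pointwise majorization.

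With the majorization in place, subadditivity of ${\rm cap}_{\alpha,s}$ reduces matters to controlling ${\rm cap}_{\alpha,s}\bigl(\{(I_\alpha\ast\varphi)^s > ct\}\bigr)$ and ${\rm cap}_{\alpha,s}\bigl(\{M_{\alpha s}(\varphi^s) > ct\}\bigr)$ each by $C\|\varphi\|_{L^s}^s/t$. The first is immediate from Chebyshev's inequality applied to the Choquet integral together with the strong capacitary inequality. For the second I would run a direct $5r$-Vitali covering argument: for each $x$ in the level set choose $r_x>0$ realizing the supremum so that $\int_{B(x,r_x)} \varphi^s\, dy > ct\, r_x^{n-\alpha s}$, extract a pairwise disjoint subfamily $\{B(x_j,r_j)\}$ whose fivefold enlargements cover the set, and use ${\rm cap}_{\alpha,s}(B(x_j,5r_j))\leq C\, r_j^{n-\alpha s}$ together with the disjointness of the selected balls to conclude
\[
{\rm cap}_{\alpha,s}\bigl(\{M_{\alpha s}(\varphi^s) > ct\}\bigr) \leq C\sum_j r_j^{n-\alpha s} \leq \frac{C}{t}\sum_j \int_{B(x_j,r_j)} \varphi^s\, dy \leq \frac{C}{t}\|\varphi\|_{L^s}^s.
\]

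The main obstacle is setting up the pointwise majorization cleanly. The nonlinearity of the Choquet integral precludes the sort of casual splittings one uses in Lebesgue theory, so it is crucial that the local-global decomposition of $\varphi$ reduces the global contribution to a single pointwise value of $(I_\alpha\ast\varphi)(x)$ before we integrate against ${\rm cap}_{\alpha,s}$. Once the majorization is in hand, the strong capacitary inequality absorbs the local piece and the rest is standard covering theory and Chebyshev's inequality for the Choquet integral.
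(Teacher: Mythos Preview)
Your argument is correct and follows essentially the same route as the paper: the same local/global split of $\varphi$ at scale $r$, the same geometric comparison to bound the far piece by $(I_\alpha\ast\varphi)(x)$, the strong capacitary inequality for the near piece, and the same Vitali covering for the fractional-maximal level set. The only difference is organizational---you first package the estimate as a pointwise majorization $\mathcal{M}_{{\rm cap}_{\alpha,s}}\bigl((I_\alpha\ast\varphi)^s\bigr)\leq C_1 M_{\alpha s}(\varphi^s)+C_2(I_\alpha\ast\varphi)^s$ and then split the level set, whereas the paper first picks $r_x$ for each $x\in E_t$ and splits $E_t=U_t\cup V_t$ according to which piece dominates at that particular radius; the two sets $\{M_{\alpha s}(\varphi^s)>ct\}$ and $\{(I_\alpha\ast\varphi)^s>ct\}$ in your version play exactly the roles of $U_t$ and $V_t$.
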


\begin{proof}
We first observe that for every $x \in E_t$, there exists an open ball, denoted as $B(x,r_x)$, satisfying 
\begin{align*} 
\frac{1}{c_{\alpha,s} r_x^{n-\alpha s}}\int_{B(x,r_x)}(I_{\alpha}\ast\varphi)^{s} \; d{\rm cap}_{\alpha,s}>t,
\end{align*}
where for convenience of display we use the notation
\begin{align*}
c_{\alpha,s}:= {\rm cap}_{\alpha,s}(B(0,1)).
\end{align*}
It follows that 
\begin{align}\label{621.5}
c_{\alpha,s} r_x^{n-\alpha s}t&<\int_{B(x,r_x)}(I_{\alpha}\ast\varphi)^{s} \; d{\rm cap}_{\alpha,s}\\ \nonumber
&\leq\int_{B(x,r_x)}2^{s-1}\left((I_{\alpha}\ast\phi_x)^{s}+(I_{\alpha}\ast\psi_x)^{s}\right) \; d{\rm cap}_{\alpha,s}\\ \nonumber
&\leq 2^{s}\left(\int_{B(x,r_x)}(I_{\alpha}\ast\phi_x)^{s} \; d{\rm cap}_{\alpha,s}+\int_{B(x,r_x)}(I_{\alpha}\ast\psi_x)^{s} \; d{\rm cap}_{\alpha,s}\right), \nonumber
\end{align}
where
\begin{align*}
  \phi_x : = \varphi \chi_{B(x,  4 r_x)}
\end{align*}
and 
\begin{align*}
    \psi_x : = \varphi  - \phi_x.
\end{align*}
Therefore, if we define 
\begin{align}\label{622}
U_t:= \left\{ x\in E_t: 
\frac{1}{c_{\alpha,s} r_x^{n-\alpha s}}\int_{B(x,r_x)}(I_{\alpha}\ast\phi_x)^{s} \;d{\rm cap}_{\alpha,s}>\frac{t}{2^{s+1}}\right\}
\end{align}
and
\begin{align}\label{623}
V_t:= \left\{ x\in E_t:
\frac{1}{c_{\alpha,s} r_x^{n-\alpha s}}\int_{B(x,r_x)}(I_{\alpha}\ast\psi_x)^{s} \;d{\rm cap}_{\alpha,s}>\frac{t}{2^{s+1}} \right\} ,
\end{align}
we obtain
\begin{align*}
    E_t \subseteq U_t \cup V_t
\end{align*}
by inequality (\ref{621.5}).

We now claim that there exists a constant $C' = C'(n, \alpha, s)>0$ such that 
\begin{align}\label{621.6}
   {\rm cap}_{\alpha,s} (U_t)\leq \frac{C'}{t} \Vert \varphi \Vert_{L^s(\mathbb{R}^n)}^s.
\end{align}
To this end, we apply Theorem 7.1.1 from \cite{AH} which states that there exists $C_1 =C_1(n,\alpha,s)>0$ such that 
\begin{align}\label{CSI}
\int_{0}^{\infty}{\rm cap}_{\alpha,s}\left(\left\{x\in\mathbb{R}^{n}:(I_{\alpha}\ast\varphi(x))^{s}>t\right\}\right) \; dt\leq C_{1}\|\varphi\|_{L^{s}(\mathbb{R}^{n})}^{s},
\end{align}
and deduce that for every $x \in U_t$, we have
\begin{align*}
c_{\alpha,s} r_x^{n-\alpha s} \frac{t}{2^{s+1}}  &< \int_{\mathbb{R}^{n}}(I_{\alpha}\ast\phi_x)^{s} \; d{\rm cap}_{\alpha,s} \\
&\leq C_{1}\int_{\mathbb{R}^{n}}\phi_x(y)^{s} \;dy\\
&=C_{1}\int_{B(x, 4r_x)}\varphi(y)^{s} \;dy
\end{align*}
That is, for every $x\in U_t$, there exists $r_x >0$ such that
\begin{align}\label{625}
C_2 \int_{B(x, 4r_x)}\varphi(y)^{s} \;dy >  r_x^{n-\alpha s}t
\end{align}
for
\begin{align*}
    C_2:=\frac{C_1 2^{s+1}}{c_{\alpha,s}}.
\end{align*}
An application of the Vitali covering lemma \cite[Theorem 1 on p.~27]{EvansGariepy} to the collection of balls $\mathcal{F}: = \{ B(x,  4 r_x)\}_{x \in U_t}$, we obtain a countable subcollection $\mathcal{G}$ of disjoint balls in $\mathcal{F}$ such that
\begin{align*}
    \bigcup_{ B \in \mathcal{F}} B  \subseteq  \bigcup_{ B \in \mathcal{G}}  \hat{B},
\end{align*}
where $\hat{B}$ denotes the open ball with same centre with $B$ but 5 times the radius of $B$. As a result, we obtain the estimate
\begin{align*}
    {\rm cap}_{\alpha,s} (U_t) &\leq  {\rm cap}_{\alpha,s}\left( \bigcup_{B (x, 4  r_x) \in \mathcal{F}} B(x, 4 r_x)  \right)\\
    & \leq {\rm cap}_{\alpha,s} \left( \bigcup_{B (x, 4 r_x)  \in \mathcal{G}} B(x, 20 r_x)  \right)\\
    &\leq \sum_{B (x, 4 r_x) \in \mathcal{G}}{\rm cap}_{\alpha,s} ( B(x, 20 r_x)  )\\
    &= 20^{n-\alpha s} c_{\alpha,s}  \sum_{B (x, 4 r_x)  \in \mathcal{G}} r_x^{n-\alpha s }.
\end{align*}
The preceding inequality in combination with \eqref{625} yields the estimate
\begin{align*}
    {\rm cap}_{\alpha,s} (U_t) &\leq \frac{C'}{t} \sum_{B (x, 4 r_x)  \in \mathcal{G}} \int_B \varphi (y)^s  \; dy\\
    &\leq \frac{C'}{t} \Vert \varphi\Vert_{L^{s}(\mathbb{R}^n)}^s
\end{align*}
for
\begin{align*}
C':= 20^{n-\alpha s} C_1 2^{s+1},
\end{align*}
which verifies \eqref{621.6}.

Now we are left to show that there exists $C''=C''(n, \alpha,s)>0$ such that
\begin{align*}
    {\rm cap}_{\alpha,s}(V_t) \leq \frac{C''}{t} \Vert \varphi \Vert_{L^s(\mathbb{R}^n)}^s.
\end{align*}
To this end, we let $x \in V_t$ and observe that for $y_1,$ $y_2 \in B(x, r_x)$ and $z \in \mathbb{R}^n \setminus B(x,  4 r_x)$, we have
\begin{align*}
|y_{1}-z|\geq  \frac{1}{2} |y_{2}-z|,
\end{align*}
and hence 
\begin{align*}
  I_{\alpha}(y_{1}-z)=\frac{1}{\gamma(\alpha)}\frac{1}{|y_{1}-z|^{n-\alpha}}\leq \frac{1}{\gamma(\alpha)}\frac{2^{n-\alpha}}{|y_{2}-z|^{n-\alpha}}=2^{n-\alpha} I_{\alpha}(y_{2}-z).
\end{align*}
Therefore, for any $y_{1}\in B(x,r_x)$, we have
\begin{align*}
I_{\alpha}\ast\psi_x(y_{1})\leq 2^{n-\alpha}\inf_{y\in B(x,r_x)}I_{\alpha}\ast\psi_x(y)\leq 2^{n-\alpha}\inf_{y \in B(x,r_x)}I_{\alpha}\ast\varphi(y),
\end{align*}
where we have used the preceding inequality and non-negativity of $\varphi$.  The combination of this inequality and the defining condition in \eqref{623} yields the estimate
\begin{align*}
\frac{t}{2^{s+1}} &< \frac{1}{c_{\alpha,s}r_x^{n-\alpha s}} \int_{B(x,r_x)} \left( I_\alpha \ast \psi_x \right)^s \; d {\rm cap}_{\alpha,s}\\
&\leq \frac{2^{n-\alpha}}{c_{\alpha,s} r_x^{n-\alpha s}} \int_{B(x, r_x)} \left(\inf_{y \in B(x, r_x)} I_\alpha \ast \varphi(y)  \right)^s \; d {\rm cap}_{\alpha,s} \\
&= \frac{2^{n-\alpha}}{c_{\alpha,s} r_x^{n-\alpha s}} {\rm cap}_{\alpha,s} \left( B(x,r_x) \right)  \left(\inf_{y \in B(x, r_x)} I_\alpha \ast \varphi(y)  \right)^s \\
&= 2^{n-\alpha} \left(\inf_{y \in B(x, r_x)} I_\alpha \ast \varphi(y) \right)^s\\
&\leq  2^{n-\alpha} \left( I_\alpha \ast \varphi(x) \right)^s
\end{align*}
In particular,
\begin{align*}
   t^{\frac{1}{s}}\leq 2^{(n-\alpha+s+1)/s} I_{\alpha}\ast\varphi(x) \quad \text{ for all } x \in V_t,
\end{align*}
which means that the function
\begin{align*}
\frac{2^{(n-\alpha+s+1)/s} \varphi}{t^{1/s}}
\end{align*}
is admissible for the computation of the capacity of $V_t$, i.e. 
\begin{align*}
    {\rm cap}_{\alpha,s} (V_t) &\leq   \left\Vert \frac{2^{(n-\alpha+s+1)/s} \varphi}{t^{1/s}} \right\Vert_{L^s (\mathbb{R}^n)}^s\\
    &=\frac{C''}{t} \Vert  \varphi \Vert_{L^s (\mathbb{R}^n)}^s
\end{align*}
for 
\begin{align*}
 C'':=   2^{n-\alpha+s+1}.
\end{align*}
This completes the proof with the choice
\begin{align*}
C:= C'+C'' \equiv 20^{n-\alpha s} C_1 2^{s+1}+2^{n-\alpha+s+1}.
\end{align*}
\end{proof}

\begin{proof}[Proof of Theorem \ref{main 1}]

We first prove $(ii)$: Let $\varphi\geq 0$ be such that $I_{\alpha}\ast\varphi\geq|f|^{1/s}$. Lemma \ref{main pro} entails that there exists a constant $C= C (n, \alpha, s) >0$ such that
\begin{align*}
&{\rm cap}_{\alpha,s}\left(\left\{x\in\mathbb{R}^{n}:\mathcal{M}_{{\rm cap}_{\alpha,s}}f(x)>t\right\}\right)\\
&\leq {\rm cap}_{\alpha,s}\left(\left\{x\in\mathbb{R}^{n}:\mathcal{M}_{{\rm cap}_{\alpha,s}}\left((I_{\alpha}\ast\varphi)^{s}\right)(x)>t\right\}\right)\\
&\leq \frac{C}{t}\|\varphi\|_{L^{s}(\mathbb{R}^{n})}^{s}.
\end{align*}
Taking the infimum over such $\varphi$ yields the inequality
\begin{align*}
&{\rm cap}_{\alpha,s}\left(\left\{x\in\mathbb{R}^{n}:\mathcal{M}_{{\rm cap}_{\alpha,s}}f(x)>t\right\}\right) \leq \frac{C}{t}\Gamma_{\alpha,s}(|f|^{1/s}),
\end{align*}
for $\Gamma$ as defined in Lemma \ref{fill_in_the_gaps}.  An application of Lemma \ref{fill_in_the_gaps} then implies
that
\begin{align*}
{\rm cap}_{\alpha,s}\left(\left\{x\in\mathbb{R}^{n}:\mathcal{M}_{{\rm cap}_{\alpha,s}}f(x)>t\right\}\right)\leq \frac{C'}{t}\int_{\mathbb{R}^{n}}|f|\;d{\rm cap}_{\alpha,s}
\end{align*}
for
\begin{align*}
C':= C \frac{2^s}{1-2^{-s}},   
\end{align*}
which is the claim of $(ii)$.

 We next prove $(i)$:  The inequality \begin{align*}
      \frac{1}{r^{n-\alpha s}}\int_{B(x,r)} |f_1 +f_2| \; d{\rm cap}_{\alpha,s} \leq2 \left(  \frac{1}{r^{n-\alpha s}} \int_{B(x,r)} |f_1 |  \; d{\rm cap}_{\alpha,s}+  \frac{1}{r^{n-\alpha s}} \int_{B(x,r)}  |f_2 | \; d{\rm cap}_{\alpha,s} \right)
  \end{align*}
 for every ball $B(x,r)$ in $\mathbb{R}^n$ implies
\begin{align*}
   \left| \mathcal{M}_{{\rm cap}_{\alpha,s}} (f_1 + f_2) (x) \right| &  \leq 2 \left( \left| \mathcal{M}_{{\rm cap}_{\alpha,s}} f_1 (x) \right|  + \left| \mathcal{M}_{{\rm cap}_{\alpha,s}} f_2 (x)  \right| \right).
\end{align*}
We note that the weak-type $(1,1)$ estimate of the operator $\mathcal{M}_{{\rm cap}_{\alpha,s}}$ is given by $(ii)$, and as in the proof of Theorem \ref{maintheorem1} we have
\begin{align*}
    \Vert \mathcal{M}_{{\rm cap}_{\alpha,s}} f \Vert_{L^\infty ({\rm cap}_{\alpha,s})} \leq \Vert  f \Vert_{L^\infty({\rm cap}_{\alpha,s})}.
\end{align*}
Therefore an application of Lemma \ref{interpolationofcapacity} with $T = \mathcal{M}_{{\rm cap}_{\alpha,s}}$ and $C= {\rm cap}_{\alpha,s}$ yields that for every $p>1$, there exists a constant $A_3$ depending on $\alpha$, $s$, $n$ and $p$ such that
\begin{align*}
    \|\mathcal{M}_{{\rm cap}_{\alpha,s}}  f \|_{L^p ({\rm cap}_{\alpha,s})} \leq A_3 \Vert f \Vert_{L^p({\rm cap}_{\alpha,s})}
\end{align*}
and the proof of $(i)$ is complete.
\end{proof}

We next commence with the 
\begin{proof}[Proof of Corollary \ref{corollary_LDT_Riesz}]
Let $f \in L^p({\rm cap}_{\alpha,s})$.  For any $g \in C_c(\mathbb{R}^n)$, one has
\begin{align*}
\lim_{r \to 0} \frac{1}{{\rm cap}_{\alpha,s}(B(0,1))r^{n-\alpha s}}\int_{B(x,r)}|g-g^*(x)|^p \;d\text{cap}_{\alpha,s} &= 0,\\
\frac{1}{{\rm cap}_{\alpha,s}(B(0,1))r^{n-\alpha s}}\int_{B(x,r)}|g^*(x)-f^*(x)|^p \;d\text{cap}_{\alpha,s}&=|g^*(x)-f^*(x)|^p,\\
\limsup_{r \to 0} \frac{1}{{\rm cap}_{\alpha,s}(B(0,1))r^{n-\alpha s}}\int_{B(x,r)}|f-g|^p \;d\text{cap}_{\alpha,s}  &\leq \mathcal{M}_{{\rm cap}_{\alpha,s}} |f-g|^p
\end{align*}
and therefore
\begin{align*}
\limsup_{r\to 0}\frac{1}{{\rm cap}_{\alpha,s}(B(0,1))r^{n-\alpha s}}\int_{B(x,r)}&|f-f^*(x)|^p \;d\text{cap}_{\alpha,s} \\
&\leq C\left(\mathcal{M}_{{\rm cap}_{\alpha,s}} |f-g|^p +|g^*(x)-f^*(x)|^p\right).
\end{align*}
Thus for any $\lambda>0$, using subadditivity of the capacity, the weak-type estimate obtained in Theorem \ref{main 1}, and Chebychev's inequality, we find
\begin{align*}
\text{cap}_{\alpha,s}\left( \left\{ \limsup_{r\to 0}\frac{1}{{\rm cap}_{\alpha,s}(B(0,1))r^{n-\alpha s}}\int_{B(x,r)}|f-f^*(x)|^p>\lambda\right\} \right) \leq \frac{C'}{\lambda^p} \|f-g\|_{L^p({\rm cap}_{\alpha,s})}^p.
\end{align*}
By density of $C_c(\mathbb{R}^n)$ in $L^p({\rm cap}_{\alpha,s})$ we conclude that for any $\lambda>0$
\begin{align*}
    \text{cap}_{\alpha,s}\left( \left\{ \limsup_{r\to 0}\frac{1}{{\rm cap}_{\alpha,s}(B(0,1))r^{n-\alpha s}}\int_{B(x,r)}|f-f^*(x)|^p>\lambda\right\} \right)=0.
\end{align*}
But then writing
\begin{align*}
    &\left\{ \limsup_{r\to 0}\frac{1}{{\rm cap}_{\alpha,s}(B(0,1))r^{n-\alpha s}}\int_{B(x,r)}|f-f^*(x)|^p>0\right\} \\
    &= \bigcup_{n \in \mathbb{N}} \left\{ \limsup_{r\to 0}\frac{1}{{\rm cap}_{\alpha,s}(B(0,1))r^{n-\alpha s}}\int_{B(x,r)}|f-f^*(x)|^p>\frac{1}{n}\right\},
\end{align*}
by countable subadditivity of the capacity we find that
\begin{align*}
   \text{cap}_{\alpha,s}\left( \left\{ \limsup_{r\to 0}\frac{1}{{\rm cap}_{\alpha,s}(B(0,1))r^{n-\alpha s}}\int_{B(x,r)}|f-f^*(x)|^p>0\right\} \right)=0,
\end{align*}
which along with the fact that $f^*$ is $\text{cap}_{\alpha,s}$ quasieverywhere equal to the limit of the averages (the Bessel analog of this result is \cite[Theorem 6.2.1 on p.~159]{AH}) completes the proof.
\end{proof}

\begin{proof}[Proof of Corollary \ref{corollary2}]
The proof is the same as Corollary \ref{maintheorem2}, with Lemma \ref{Comparisonbetweencapmaximal} and Theorem \ref{main 1} in place of Lemma \ref{maximaldimetionalchange} and Theorem \ref{maintheorem1}.  We leave the details to the interested reader.
\end{proof}

\section*{Acknowledgments}
Y.-W. Chen is supported by the National Science and Technology Council of Taiwan under research grant number 110-2115-M-003-020-MY3.  D. Spector is supported by the National Science and Technology Council of Taiwan under research grant number 110-2115-M-003-020-MY3 and the Taiwan Ministry of Education under the Yushan Fellow Program. 

\section*{Conflicts of Interest}
The authors have no conflicts of interest to declare.

\section*{Data Availability Statement}
Data sharing not applicable to this article as no datasets were generated or analyzed during the current study.

\begin{bibdiv}

\begin{biblist}


\bib{AdamsChoquet}{article}{
   author={Adams, David R.},
   title={A note on Choquet integrals with respect to Hausdorff capacity},
   conference={
      title={Function spaces and applications},
      address={Lund},
      date={1986},
   },
   book={
      series={Lecture Notes in Math.},
      volume={1302},
      publisher={Springer, Berlin},
   },
   date={1988},
   pages={115--124},
   review={\MR{942261}},
   doi={10.1007/BFb0078867},
}

\bib{Adams_PAMS_1988}{article}{
   author={Adams, David R.},
   title={Weighted capacity and the Choquet integral},
   journal={Proc. Amer. Math. Soc.},
   volume={102},
   date={1988},
   number={4},
   pages={879--887},
   issn={0002-9939},
   review={\MR{934860}},
   doi={10.2307/2047327},
}


\bib{AH}{book}{
   author={Adams, David R.},
   author={Hedberg, Lars Inge},
   title={Function spaces and potential theory},
   series={Grundlehren der mathematischen Wissenschaften [Fundamental
   Principles of Mathematical Sciences]},
   volume={314},
   publisher={Springer-Verlag, Berlin},
   date={1996},
   pages={xii+366},
   isbn={3-540-57060-8},
   review={\MR{1411441}},
   doi={10.1007/978-3-662-03282-4},
}

\bib{A3}{article}{
   author={Adams, David R.},
   author={Xiao, Jie},
   title={Strong type estimates for homogeneous Besov capacities},
   journal={Math. Ann.},
   volume={325},
   date={2003},
   number={4},
   pages={695--709},
}

\bib{A25}{article}{
   author={Adams, David R.},
   author={Xiao, Jie},
   title={Nonlinear potential analysis on Morrey spaces and their
   capacities},
   journal={Indiana Univ. Math. J.},
   volume={53},
   date={2004},
   number={6},
   pages={1629--1663},
}

\bib{Cerda}{article}{
   author={Cerd\`a, Joan},
   title={Lorentz capacity spaces},
   conference={
      title={Interpolation theory and applications},
   },
   book={
      series={Contemp. Math.},
      volume={445},
      publisher={Amer. Math. Soc., Providence, RI},
   },
   date={2007},
   pages={45--59},
}

\bib{CerdaCollMartin}{article}{
   author={Cerd\`a, Joan},
   author={Coll, Heribert},
   author={Mart\'{\i}n, Joaquim},
   title={Entropy function spaces and interpolation},
   journal={J. Math. Anal. Appl.},
   volume={304},
   date={2005},
   number={1},
   pages={269--295},
   issn={0022-247X},
}

\bib{CerdaMartinSilvestre}{article}{
   author={Cerd\`a, Joan},
   author={Mart\'{\i}n, Joaquim},
   author={Silvestre, Pilar},
   title={Capacitary function spaces},
   journal={Collect. Math.},
   volume={62},
   date={2011},
   pages={95--118},
}

\bib{Chen-Spector}{article}{
   author={Chen, You-Wei},
   author={Spector, Daniel},
   title={On functions of bounded $\beta$-dimensional mean oscillation},
   journal={Adv. Calc. Var. (to appear)},
}

\bib{C1}{article}{
   author={Dafni, Galia},
   author={Xiao, Jie},
   title={Some new tent spaces and duality theorems for fractional Carleson
   measures and $Q_\alpha(\R^n)$},
   journal={J. Funct. Anal.},
   volume={208},
   date={2004},
   number={2},
   pages={377--422},
}

\bib{EvansGariepy}{book}{
   author={Evans, Lawrence C.},
   author={Gariepy, Ronald F.},
   title={Measure theory and fine properties of functions},
   series={Studies in Advanced Mathematics},
   publisher={CRC Press, Boca Raton, FL},
   date={1992},
   pages={viii+268},
   isbn={0-8493-7157-0},
   review={\MR{1158660}},
}

\bib{Federer}{article}{
   author={Federer, Herbert},
   title={Some properties of distributions whose partial derivatives are
   representable by integration},
   journal={Bull. Amer. Math. Soc.},
   volume={74},
   date={1968},
   pages={183--186},
   issn={0002-9904},
   review={\MR{218893}},
   doi={10.1090/S0002-9904-1968-11932-9},
}


\bib{Federer-Ziemer}{article}{
   author={Federer, Herbert},
   author={Ziemer, William P.},
   title={The Lebesgue set of a function whose distribution derivatives are
   $p$-th power summable},
   journal={Indiana Univ. Math. J.},
   volume={22},
   date={1972/73},
   pages={139--158},
   issn={0022-2518},
   review={\MR{435361}},
   doi={10.1512/iumj.1972.22.22013},
}

\bib{H}{article}{
author={Harjulehto, Petteri },
   author={Hurri-Syrj\"{a}nen, Ritva},
   title={Estimates for the variable order Riesz potential with applications},
   journal={https://arxiv.org/pdf/2111.10141.pdf},
}

\bib{H1}{article}{
author={Harjulehto, Petteri },
   author={Hurri-Syrj\"{a}nen, Ritva},
   title={On Choquet integrals and Poincar\'e-Sobolev inequalities},
   journal={https://arxiv.org/abs/2203.15623},
}

\bib{SpectorHernandez2020}{article}{
	author = {Hernandez, F.},
	author ={Spector, D.},
	title = {Fractional Integration and Optimal Estimates for Elliptic Systems},
	note = {https://arxiv.org/abs/2008.05639},
}

\bib{SpectorHernandezRaita2021}{article}{
	author = {Hernandez, F.},
	author ={Spector, D.},
	author ={Rai\cb{t}\u{a}, B.},
	title = {Endpoint $L^1$
		estimates for Hodge systems}
journal={Math. Ann.}
	note={ https://doi.org/10.1007/s00208-022-02383-y}
		date={2022}
}

\bib{C4}{article}{
   author={Liu, Liguang},
   author={Xiao, Jie},
   author={Yang, Dachun},
   author={Yuan, Wen},
   title={Restriction of heat equation with Newton-Sobolev data on metric
   measure space},
   journal={Calc. Var. Partial Differential Equations},
   volume={58},
   date={2019},
   number={5},
   pages={Paper No. 165, 40},
}

\bib{Cap2}{article}{
   author={Liu, L.},
   author={Xiao, J.},
   title={A trace law for the Hardy-Morrey-Sobolev space},
   journal={J. Funct. Anal.},
   volume={274},
   date={2018},
   number={1},
   pages={80--120},
}

\bib{MS}{article}{
   author={Mart\'{\i}nez, \'{A}ngel D.},
   author={Spector, Daniel},
   title={An improvement to the John-Nirenberg inequality for functions in
   critical Sobolev spaces},
   journal={Adv. Nonlinear Anal.},
   volume={10},
   date={2021},
   number={1},
   pages={877--894},
   issn={2191-9496},
   review={\MR{4191703}},
   doi={10.1515/anona-2020-0157},
}

\bib{Mel}{article}{
   author={Mel\cprime nikov, M. S.},
   title={Metric properties of analytic $\alpha $-capacity and
   approximations of analytic functions with a H\"{o}lder condition by rational
   functions},
   language={Russian},
   journal={Mat. Sb. (N.S.)},
   volume={79 (121)},
   date={1969},
   pages={118--127},
   review={\MR{0268389}},
}



%


\bib{OP}{article}{
   author={Ooi, Keng Hao},
   author={Phuc, Nguyen Cong},
   title={Characterizations of predual spaces to a class of Sobolev multiplier type spaces},
   journal={J. Funct. Anal.},
   volume={282},
   date={2022},
   number={6},
   pages={109348},
   doi={10.1016/j.jfa.2021.109348},
}

%

\bib{OV}{article}{
   author={Orobitg, Joan},
   author={Verdera, Joan},
   title={Choquet integrals, Hausdorff content and the Hardy-Littlewood
   maximal operator},
   journal={Bull. London Math. Soc.},
   volume={30},
   date={1998},
   pages={145--150},
}

\bib{PS}{article}{
   author={Ponce, Augusto C.},
   author={Spector, Daniel},
   title={A boxing inequality for the fractional perimeter},
   journal={Ann. Sc. Norm. Super. Pisa Cl. Sci. (5)},
   volume={20},
   date={2020},
   number={1},
   pages={107--141},
   issn={0391-173X},
   review={\MR{4088737}},
}

\bib{PS1}{article}{
   author={Ponce, Augusto C.},
   author={Spector, Daniel},
   title={A decomposition by non-negative functions in the Sobolev space
   $W^{k,1}$},
   journal={Indiana Univ. Math. J.},
   volume={69},
   date={2020},
   number={1},
   pages={151--169},
   issn={0022-2518},
   review={\MR{4077159}},
   doi={10.1512/iumj.2020.69.8237},
}

\bib{PS2}{article}{
   author={Ponce, Augusto C.},
   author={Spector, Daniel},
   title={Some Remarks on Capacitary Integration and Measure Theory},
   journal={preprint},
   doi={https://arxiv.org/abs/2302.11847},
}

%

%
\bib{RSS}{article}{
   author={Raita, Bogdan},
   author={Spector, Daniel},
   author={Stolyarov, Dmitriy},
   title={A trace inequality for solenoidal charges},
   journal={Potential Anal.},
   volume={(to appear)},
   pages={https://arxiv.org/abs/2109.02029},
}
\bib{Saito}{article}{
   author={Saito, Hiroki},
   title={A note on embedding inequalities for weighted Sobolev and Besov
   spaces},
   journal={Taiwanese J. Math.},
   volume={26},
   date={2022},
   number={2},
   pages={363--379},
   issn={1027-5487},
   review={\MR{4396485}},
   doi={10.11650/tjm/211204},
}

\bib{Saito1}{article}{
   author={Saito, Hiroki},
   title={Boundedness of the strong maximal operator with the Hausdorff
   content},
   journal={Bull. Korean Math. Soc.},
   volume={56},
   date={2019},
   number={2},
   pages={399--406},
   issn={1015-8634},
   review={\MR{3936475}},
   doi={10.4134/BKMS.b180286},
}

\bib{ST}{article}{
   author={Saito, Hiroki},
   author={Tanaka, Hitoshi},
   title={Dual of the Choquet spaces with general Hausdorff content},
   journal={Studia Math.},
   volume={266},
   date={2022},
   pages={323-335},
   doi={10.4064/sm210415-29-1},
}

\bib{STW}{article}{
   author={Saito, Hiroki},
   author={Tanaka, Hitoshi},
   author={Watanabe, Toshikazu},
   title={Abstract dyadic cubes, maximal operators and Hausdorff content},
   journal={Bull. Sci. Math.},
   volume={140},
   date={2016},
   number={6},
   pages={757--773},
   issn={0007-4497},
   review={\MR{3543752}},
   doi={10.1016/j.bulsci.2016.02.001},
}
\bib{STW1}{article}{
   author={Saito, Hiroki},
   author={Tanaka, Hitoshi},
   author={Watanabe, Toshikazu},
   title={Block decomposition and weighted Hausdorff content},
   journal={Canad. Math. Bull.},
   volume={63},
   date={2020},
   number={1},
   pages={141--156},
   issn={0008-4395},
   review={\MR{4059812}},
   doi={10.4153/s000843951900033x},
}

\bib{S}{book}{
   author={Stein, Elias M.},
   title={Singular integrals and differentiability properties of functions},
   series={Princeton Mathematical Series, No. 30},
   publisher={Princeton University Press, Princeton, N.J.},
   date={1970},
   pages={xiv+290},
   review={\MR{0290095}},
}

\bib{Spector-PM}{article}{
   author={Spector, Daniel},
   title={A noninequality for the fractional gradient},
   journal={Port. Math.},
   volume={76},
   date={2019},
   number={2},
   pages={153--168},
   issn={0032-5155},
   review={\MR{4065096}},
   doi={10.4171/pm/2031},
}
\bib{Grafakos}{book}{
   author={Grafakos, Loukas},
   title={Classical Fourier analysis},
   series={Graduate Texts in Mathematics},
   volume={249},
   edition={2},
   publisher={Springer, New York},
   date={2008},
   pages={xvi+489},
   isbn={978-0-387-09431-1},
   review={\MR{2445437}},
}

\bib{YangYuan}{article}{
   author={Yang, Dachun},
   author={Yuan, Wen},
   title={A note on dyadic Hausdorff capacities},
   journal={Bull. Sci. Math.},
   volume={132},
   date={2008},
   number={6},
   pages={500--509},
   issn={0007-4497},
   review={\MR{2445577}},
   doi={10.1016/j.bulsci.2007.06.005},
}


\bib{C11}{article}{
   author={Xiao, Jie},
   title={Homogeneous endpoint Besov space embeddings by Hausdorff capacity
   and heat equation},
   journal={Adv. Math.},
   volume={207},
   date={2006},
   number={2},
   pages={828--846},
}

\end{biblist}
	
\end{bibdiv}

\end{document}